\newtheorem{theorem}{Theorem}[section]
\newtheorem{thm}[theorem]{Theorem}
\newtheorem{lem}[theorem]{Lemma}
\newtheorem{cor}[theorem]{Corollary}
\makeatletter \@addtoreset{equation}{section}
\newcommand{\qbinom}[2]{\genfrac{[}{]}{0pt}{}{#1}{#2}}
\newcommand{\Qrki}{Q_1(b \Mid \br;\bk)}
\newcommand{\Qrkii}{Q_2(b \Mid \br;\bk)}
\newcommand{\Qrka}{Q_3(b \Mid \br;\bk)}
\newcommand{\Qrkb}{Q_4(b \Mid \br;\bk)}
\newcommand{\Mid}{\:|\:}  
\newcommand{\br}{\mathbf r}
\newcommand{\bk}{\mathbf k}
\newcommand{\Erk}{E_{\br,\bk}}
\newcommand{\lrq}[3]{\left(\frac{#1}{#2}#3\right)}
\newcommand*\pFq[6][8]{%
  \begingroup 
  \pFqmuskip=#1mu\relax
  \mathcode`\,=\string"8000
  \begingroup\lccode`\~=`\,
  \lowercase{\endgroup\let~}\pFqcomma
  {}_{#2}F_{#3}{\left[\genfrac..{0pt}{}{#4}{#5};#6\right]}%
  \endgroup
}
\newcommand{\pFqcomma}{\mskip\pFqmuskip}
\def\CT{\mathop{\mathrm{CT}}}
\def\LC{\mathop{\mathrm{LC}}}
\begin{document}
	
	\def\CC{\mathbb{C}}
	\title{Two Families  of Constant Term Identities}
    \author{Keru Zhou \thanks{~School of Mathematics and Statistics, HNP-LAMA, Central South University,
    		Changsha, Hunan 410083, P. R. China. E-mail address: {\it krzhou1999@knights.ucf.edu}.}}

    \date{}
	\maketitle

	\begin{abstract}
     In 1985, Bressoud and Goulden derived the formula for the constant term in  $\prod_{(i,j)\in T} \frac{x_j}{x_i}\\\prod_{0\le i<j \le n}(\frac{x_i}{x_j})_{a_i}(\frac{qx_j}{x_i})_{a_j-1}$,  where $T \subseteq \{(i,j)\mid 0\le i<j \le n\}$.
     This result implies the Andrews' $q$-Dyson identity. In 2006, Gessel and Xin proved the $q$-Dyson identity by considering both sides of the equality as polynomials  in $q^{a_0}$. We use this approach to determine the coefficients of $x_0/x_1$ and $x_0/x_2$ in Laurent polynomials studied by Bressoud and Goulden.
	\end{abstract}
    Mathematics Subject Classification: 05A30.\\
    {\small \emph{Key words}. $q$-series, $q$-Dyson style product, Laurent
series, tournament, constant term}
	\section{Introduction}
	
	Throughout this paper, we adopt the following notation :
	\begin{multline*}
	\shoveright{\text{$n$ is a positive integer, $a_0$ is an integer and $a_1,a_2,\cdots,a_n$ are nonnegative integers;}\\}
	\shoveright{\mathbf{a}:=(a_0,a_1,\ldots,a_n);}
	\shoveright{\mathbf{x}:=(x_0,x_1,\ldots,x_n);}
   \shoveright{(z)_{\infty}:=(1-z)(1-zq)\cdots;\\}
	\shoveright{
    \text{for $k$ an integer \quad}\\
   (z)_k:=\frac{(z;q)_{\infty}}
{(zq^k;q)_{\infty}}=
\begin{cases}
(1-z)(1-zq)\cdots (1-zq^{k-1}) & \text{if $k\geq 0,$}
\\[3mm]
\displaystyle
\frac{1}{(1-zq^k)(1-zq^{k+1})\cdots(1-zq^{-1})}
& \text{if $k<0$;}
\end{cases}\\}
	\shoveright{\genfrac{[}{]}{0pt}{}{m}{n}=\frac{(q^{m-n+1})_n}{(q)_n},
     \quad \text{for $m$ an integer, which follow from the definition of the $q$-binomial coefficient;}\\}
	\shoveright{D_n(\mathbf{x},\mathbf{a},q):=\prod_{0\leq i<j\leq n}
			\left(\frac{x_i}{x_j}\right)_{\!\!a_i}
			\left(\frac{x_j}{x_i}q\right)_{\!\!a_{j-1}};\quad\quad\quad\quad \quad \quad \quad \quad \quad \quad\quad\quad(q\mbox{-Dyson style product})\\}
	\shoveright{\CT_{\mathbf{x}}F(\mathbf{x})\ \mbox{the
	constant term of the series $F(\mathbf{x})$;}}
	\end{multline*}
	
	$
	   E:={\{(i,j)\mid1\leq i<j\leq n\}};
	$

	In 1891, Dixon \cite{dixon1891} provide the following identity. 
	
	\begin{thm}[Dixon's identity]
	Let n be a positive integer. Then
	\begin{equation}
	    \sum_{k=-n}^{n}(-1)^k {{2n}\choose{k+n}}^3 = \frac{(3n)!}{{(n)!}^3}.
	\end{equation}
	
	\end{thm}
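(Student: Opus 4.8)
The plan is to realize the right-hand side $\frac{(3n)!}{(n!)^3}=\binom{3n}{n,n,n}$ as a three-variable Dyson constant term and then extract that same constant term by hand so as to recover the alternating cube sum. Concretely, I would consider the degree-zero Laurent polynomial
\[
P(x_0,x_1,x_2):=\prod_{0\le i<j\le 2}\left(1-\frac{x_i}{x_j}\right)^{\!n}\left(1-\frac{x_j}{x_i}\right)^{\!n},
\]
which is the $q\to 1$ specialization of the $q$-Dyson product $D_2(\mathbf{x},\mathbf{a},q)$ with $\mathbf{a}=(n,n,n)$. I will show that $\CT_{\mathbf{x}}\,P$ equals the left-hand side $\sum_{k=-n}^{n}(-1)^k\binom{2n}{n+k}^3$ by a direct expansion, while the value of $\CT_{\mathbf{x}}\,P$ itself is supplied by the classical Dyson constant term evaluation.

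For the extraction step, I would first use that $P$ is homogeneous of degree $0$ to set $x_2=1$, reducing the problem to a constant term in $x_0,x_1$. Each antipodal pair simplifies through the identity $(1-z)^n(1-z^{-1})^n=(-1)^n z^{-n}(1-z)^{2n}$, so that
\[
P\big|_{x_2=1}=(-1)^{3n}\,x_0^{-2n}\,(1-x_0)^{2n}(1-x_1)^{2n}\left(1-\frac{x_0}{x_1}\right)^{\!2n}.
\]
Expanding the three factors by the binomial theorem and collecting the coefficient of $x_0^{0}x_1^{0}$ forces the exponent conditions $i+j=2n$ and $m=j$ on the summation indices, which collapses the triple sum to $(-1)^{3n}\sum_{j}(-1)^{2n+j}\binom{2n}{j}^3$. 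Reindexing $j=n+k$ and using $(-1)^{4n}=1$ turns this into exactly $\sum_{k=-n}^{n}(-1)^k\binom{2n}{n+k}^3$; the restriction $|k|\le n$ is automatic since the binomial coefficients vanish outside this range.

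The remaining ingredient, and the real substance of the argument, is the evaluation $\CT_{\mathbf{x}}\,P=\frac{(3n)!}{(n!)^3}$. This is the three-variable, equal-parameter case of Dyson's constant term conjecture (equivalently, Andrews' $q$-Dyson identity at $q=1$), and I expect it to be the main obstacle if one insists on a self-contained proof. I would either cite Good's recursive proof directly, or establish this special case on its own via Good's functional-equation recursion, which reduces the three-variable constant term to lower ones and pins down the value $\binom{3n}{n,n,n}$ by induction. As a fully self-contained alternative that avoids Dyson's theorem entirely, one could instead run the Wilf–Zeilberger method on the normalized summand $F(n,k)=(-1)^k\binom{2n}{n+k}^3(n!)^3/(3n)!$, verifying a first-order recurrence in $n$ together with the base case $n=0$; the price is a less transparent, machine-style certificate in place of the conceptual constant-term identity.
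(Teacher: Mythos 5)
Your proposal is correct, but there is nothing in the paper to compare it against: the paper states Dixon's identity as background (Theorem 1.1, with a citation to Dixon's 1891 paper) and gives no proof of it at all. What you have written is the classical derivation of Dixon's identity as the three-variable, equal-parameter case of Dyson's constant term theorem, and every step checks out. The homogeneity argument justifying $x_2=1$ is valid because $P$ has only degree-zero monomials, so the specialization map on exponent vectors $(a,b,c)$ with $a+b+c=0$ is injective; the identity $(1-z)^n(1-z^{-1})^n=(-1)^nz^{-n}(1-z)^{2n}$ applied to the three antipodal pairs gives exactly your displayed form (the $x_1^{\pm n}$ powers cancel); and the exponent conditions $j=m$, $i+m=2n$ yield the inner sign $(-1)^{2n+j}$, which combines with the outer $(-1)^{3n}$ under $j=n+k$ to give $(-1)^{6n+k}=(-1)^k$, matching your bookkeeping via $(-1)^{4n}=1$. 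Two remarks worth noting. First, you do not need an external citation for the Dyson evaluation: the paper itself states Dyson's theorem immediately after Dixon's identity (its Theorem 1.2, with the paper's $n=2$, i.e.\ variables $x_0,x_1,x_2$, and $a_0=a_1=a_2=n$), with references to the proofs of Wilson, Gunson and Good, so invoking it is entirely in the spirit of the text --- indeed your argument makes explicit the equivalence that motivates placing the two theorems side by side. Second, your WZ-certificate alternative is a legitimate self-contained fallback, but it is orthogonal to the paper's constant-term theme and adds nothing here; the constant-term route is the natural one.
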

	
	In 1962, Freeman Dyson \cite{dyso1962} conjectured the following
	identity.
	\begin{theorem}\label{t-dyson}
		For nonnegative integers $a_0,a_1,\ldots ,a_n$,
		\begin{equation*}
		\CT_{\mathbf{x}} \prod_{0\le i\ne j \le n}
		\left(1-\frac{x_i}{x_j}\right)^{\!\!a_i} =
		\frac{(a_0+a_1+\cdots+a_n)!}{a_0!\, a_1!\, \cdots a_n!}.
		\end{equation*}
	\end{theorem}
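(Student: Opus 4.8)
The plan is to follow Good's inductive argument based on a Lagrange interpolation identity, proving the claim by a double induction on $n$ and on $a_0+\cdots+a_n$. Write $F(\mathbf a):=\CT_{\mathbf x}\prod_{0\le i\ne j\le n}(1-x_i/x_j)^{a_i}$; since every $a_i$ is a nonnegative integer, the product is a genuine Laurent polynomial and $F(\mathbf a)$ is well defined. The target is to show $F(\mathbf a)=M(\mathbf a)$, where $M(\mathbf a):=(a_0+\cdots+a_n)!/(a_0!\cdots a_n!)$.

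First I would establish the interpolation identity
\[
\sum_{i=0}^{n}\ \prod_{\substack{0\le j\le n\\ j\ne i}}\frac{1}{1-x_i/x_j}=1,
\]
which follows from Lagrange interpolation applied to the constant polynomial $1$ at the nodes $x_0,\ldots,x_n$ and then evaluated at $0$ (using $x_j/(x_j-x_i)=1/(1-x_i/x_j)$). Multiplying the Dyson product by this identity and observing that, for each fixed $i$, the factor $\prod_{j\ne i}(1-x_i/x_j)^{-1}$ simply lowers the exponent $a_i$ by one, I obtain---provided every $a_i\ge 1$, so that each summand is again a Laurent polynomial---the term-by-term constant-term recursion
\[
F(\mathbf a)=\sum_{i=0}^{n}F(\mathbf a-\mathbf{e}_i),
\]
with $\mathbf{e}_i$ the $i$-th unit vector. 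The multinomial coefficient satisfies the identical recursion $M(\mathbf a)=\sum_i M(\mathbf a-\mathbf{e}_i)$ (condition a word on its last letter), so it remains only to match boundary data.

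For the base of the induction I would treat the case where some coordinate vanishes, say $a_i=0$. Then the factors $(1-x_i/x_j)^{a_i}$ disappear, and extracting the constant term in $x_i$ from the surviving $\prod_{j\ne i}(1-x_j/x_i)^{a_j}$ forces the total $x_i$-degree to zero; a short argument shows this collapses $F$ to the $(n-1)$-variable Dyson constant term in the remaining variables, exactly as $M(\mathbf a)$ collapses to the corresponding multinomial when $a_i=0$. Iterating the reduction down to $n=0$, or stopping at $n=1$ where the identity is the Chu--Vandermonde evaluation $\binom{a_0+a_1}{a_0}$, then closes the induction.

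The main obstacle I anticipate is this boundary analysis: the recursion step is only literally valid when all $a_i\ge 1$, because a summand with $a_i=0$ acquires a genuine pole $\prod_{j\ne i}(1-x_i/x_j)^{-1}$ and is no longer a Laurent polynomial whose constant term can be read off directly. Justifying that these degenerate terms may be dropped---equivalently, that a zero parameter faithfully reduces the dimension---is the delicate point, and it must be reconciled with the chosen formal-series conventions so that $\CT_{\mathbf x}$ commutes with the manipulations. Once this dimension-reduction lemma is in place, the two recursions and their shared boundary values force $F(\mathbf a)=M(\mathbf a)$ for every $\mathbf a$. The Gessel--Xin Laurent-series approach highlighted in the introduction offers an alternative that sidesteps the pole issue by carefully fixing the order of expansion, and would be the natural route if one wanted a proof uniform with the $q$-analogue.
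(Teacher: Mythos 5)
Your proposal is correct, but there is nothing in the paper to check it against: Theorem~\ref{t-dyson} is stated purely as background, with the proofs credited to Wilson, Gunson, and Good, and no argument is given in the text. What you have reconstructed is, in essence, exactly Good's recursive proof, and it goes through as you describe. Both ingredients are valid: the Lagrange identity $\sum_{i=0}^{n}\prod_{j\ne i}(1-x_i/x_j)^{-1}=1$ holds as an identity of rational functions in distinct indeterminates, and multiplying the Dyson product by it and extracting constant terms termwise yields $F(\mathbf{a})=\sum_{i}F(\mathbf{a}-\mathbf{e}_i)$ whenever every $a_i\ge 1$. The boundary issue you flag as the main obstacle is real but dissolves more easily than you suggest: you never need to justify dropping degenerate summands, because in the clean form of the induction the recursion is invoked \emph{only} when all $a_i\ge 1$, while the case $a_i=0$ is handled directly by dimension reduction. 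That reduction is immediate, since when $a_i=0$ the variable $x_i$ appears only through $\prod_{j\ne i}(1-x_j/x_i)^{a_j}$, which contains solely nonpositive powers of $x_i$ with coefficient of $x_i^0$ equal to $1$; hence $\CT_{x_i}$ deletes this factor outright, and since everything in sight is an honest Laurent polynomial, no choice of series expansion or ordering of constant-term operators ever arises. With the multinomial satisfying the same recursion (conditioning a word on its last letter) and the same boundary collapse (deleting a zero part), the double induction on $n$ and $a_0+\cdots+a_n$ closes, with base $n=0$ or the Chu--Vandermonde case $n=1$ as you note. It is worth contrasting this with the methodology the paper actually develops for its own theorems: there the constant term is viewed as a polynomial of bounded degree in $q^{a_0}$ (cf.\ Lemma~\ref{pd}) and is pinned down by exhibiting enough vanishing points via iterated partial-fraction extraction in the field $K\langle\!\langle x_n,\ldots,x_0\rangle\!\rangle$. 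Good's recursion is far more elementary for the $q=1$ statement, but it famously does not survive $q$-deformation, whereas the Gessel--Xin polynomiality route is uniform across Theorem~\ref{thm-dyson} and the coefficient extractions of Theorems~\ref{mainthm2} and~\ref{mainthm3}, which is precisely why the paper adopts it.
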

	
	Dyson's conjecture was first proved independently  by Wilson \cite{kenn1962} and Gunson \cite{gunson}. An elegant
	recursive proof was given by Good \cite{good1970}.
	
	George Andrews \cite{geor1975} conjectured a $q$-analog of the
	Dyson conjecture in 1975.
	\begin{thm}\label{thm-dyson}
		For nonnegative integers $a_0,a_1,\dots,a_n$,
		\begin{align*}
		\CT_{\mathbf{x}}\prod_{0\leq i<j\leq n}\Big(\frac{x_i}{x_j}\Big)_{a_i}\Big(\frac{qx_j}{x_i}\Big)_{a_j}=\frac{(q)_{a_0+\cdots+a_n}}{(q)_{a_0}(q)_{a_1}\cdots(q)_{a_n}}.
		\end{align*}
	\end{thm}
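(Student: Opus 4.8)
The plan is to evaluate $\CT_{\mathbf{x}}F$, where $F:=\prod_{0\le i<j\le n}(x_i/x_j)_{a_i}(qx_j/x_i)_{a_j}$, by the Gessel–Xin Laurent series method. I work in the field of iterated Laurent series $\mathbb{Q}(q)((x_n))((x_{n-1}))\cdots((x_0))$, in which each factor $1/(1-x_i/x_j)$ that arises is expanded in ascending powers of the smaller variable; with this convention $\CT_{\mathbf{x}}$ is the well-defined iterated extraction $\CT_{x_0}\CT_{x_1}\cdots\CT_{x_n}$, and the target value $(q)_a/\big((q)_{a_0}\cdots(q)_{a_n}\big)$ with $a:=a_0+\cdots+a_n$ degenerates to the Dyson multinomial of Theorem \ref{t-dyson} as $q\to1$, a useful consistency check. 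The induction is on $n$; the base case $n=1$ is $\CT_{\mathbf{x}}(x_0/x_1)_{a_0}(qx_1/x_0)_{a_1}=\qbinom{a_0+a_1}{a_0}$, which follows from a form of the $q$-Chu–Vandermonde summation.

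The key step is to extract the constant term in a single distinguished variable, say $x_0$, by a residue/partial-fraction argument. Isolating the $x_0$-factors gives $F=P_0(x_0)\,G$ with $P_0=\prod_{j=1}^n(x_0/x_j)_{a_0}(qx_j/x_0)_{a_j}$ and $G$ free of $x_0$. Since $P_0$ is a Laurent polynomial, I would first multiply by the $q$-analogue of Good's partial-fraction identity so as to introduce simple poles at $x_0=q^{\ell}x_j$; taking the constant term in $x_0$ then collapses to a sum of residues, each of which performs the substitution $x_0\mapsto q^{\ell}x_j$. Such a substitution merges the variable $x_0$ into $x_j$, lowering the number of free variables by one and returning a constant term of the same $q$-Dyson shape in $n$ variables, with the exponent data modified along the edge $\{0,j\}$. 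The essential difficulty, and what distinguishes the $q$-theorem from its classical predecessor, is that this reduction does not close up into a clean one-line recursion: many residue terms survive individually, and the Laurent-series bookkeeping must be done carefully enough to keep track of them and to recognise which ultimately cancel.

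Iterating the reduction, the pole selected at each stage orients an edge of the complete graph on $\{0,1,\ldots,n\}$, so that the full evaluation is reorganised as a signed, $q$-weighted sum over tournaments on $n+1$ vertices; this is the link to the theory of tournaments announced in the abstract. I expect the crux of the argument to be a cancellation lemma asserting that the contributions of all non-transitive tournaments vanish, either in pairs exchanged by an involution that reverses a cyclically oriented triangle, or because the associated iterated residue is identically zero. Once that is established, only the unique transitive tournament contributes; evaluating its single residue chain and comparing with the recursion built from the base case produces exactly $(q)_a/\big((q)_{a_0}(q)_{a_1}\cdots(q)_{a_n}\big)$, completing the induction. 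The hard part, as indicated, will be proving the tournament cancellation rigorously within the iterated Laurent series formalism.
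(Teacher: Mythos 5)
First, note that the paper does not prove Theorem~\ref{thm-dyson} at all: it is quoted as background, with the proof credited to Zeilberger--Bressoud \cite{bress1985} and the Laurent-series proof to Gessel--Xin \cite{gessxin2006}, whose method the paper then adapts for its own main theorems. Measured against that method, your proposal is missing its two load-bearing pillars. The Gessel--Xin engine is: (a) for fixed $a_1,\dots,a_n$ the constant term is a \emph{polynomial in $q^{a_0}$} of bounded degree (the analogue of the paper's Lemma~\ref{pd}), so the identity needs to be checked only at finitely many values of $a_0$; and (b) those checks are made at the \emph{negative} substitutions $a_0=-b$, $1\le b\le a_1+\cdots+a_n$, where both sides vanish --- the right side trivially, the left side by a properness/degree count in the iterated Laurent field (Lemma~\ref{lem-almostprop} plus the arithmetic dichotomy of Lemma~\ref{lem-import1}, which forces every residue chain to end in a vanishing factor such as $(q^{1-k_i})_{a_{r_i}-1}$) --- with the single remaining point $a_0=0$ handled by induction on $n$. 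Your sketch never treats the constant term as a polynomial in $q^{a_0}$, and without the substitution $a_0=-b$ your residue extraction in $x_0$ cannot even start: for nonnegative $a_0$ the factor $\prod_{j=1}^n(x_0/x_j)_{a_0}(qx_j/x_0)_{a_j}$ is a Laurent \emph{polynomial} in $x_0$, with no poles for a partial-fraction decomposition to act on. It is precisely the replacement $a_0\mapsto -b$ that creates the denominators $1-x_0/(x_jq^i)$. Your proposed substitute --- multiplying by ``the $q$-analogue of Good's partial-fraction identity'' --- is not a known working device; the absence of a usable $q$-analogue of Good's recursion is exactly why the $q$-Dyson conjecture resisted proof until 1985 (compare \cite{sill20061,sill20062}, which disturb the classical case only).

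Second, the cancellation mechanism you lean on is conjectural as written (``I expect the crux \dots to be a cancellation lemma'') and belongs to a different proof: a signed sum over tournaments with an involution reversing a cyclic triangle is roughly the skeleton of the Zeilberger--Bressoud combinatorial proof and of Bressoud--Goulden's Theorem~\ref{t-main-thm}, where tournaments genuinely parameterize the answer. In the Gessel--Xin framework that this paper follows, no sum over tournaments appears for Theorem~\ref{thm-dyson} itself: one never pairs off non-transitive contributions, one shows each rational function $Q(b\mid\br;\bk)$ is \emph{proper} in $x_{r_s}$ so that Lemma~\ref{lem-almostprop} applies with $p_0=0$, and Lemma~\ref{lem-import1} kills every surviving term (this is the content of the paper's Lemma~\ref{lem-lead3}{\bf(i)}--{\bf(ii)} in its generalized setting). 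Two further inaccuracies: after the substitution $x_0\mapsto q^{\ell}x_j$ the resulting expression is \emph{not} of $q$-Dyson shape in $n$ variables --- extra factors survive and must be re-analyzed for properness at each stage, which is why the bookkeeping $E_{\br,\bk}$ exists --- and your base case should also record that it is the $a_0=0$ evaluation (reduction to $n$ variables), not the $n=1$ case alone, that feeds the induction. To turn the proposal into a proof you must either carry out the polynomiality-plus-vanishing-points scheme in full, or actually construct and verify the claimed involution; neither is present.
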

	
	Andrews' $q$-Dyson conjecture attracted much interest
	\cite{aske1980, kade1985, stan1983, stan1984,
		stem1988}. It was first proved, combinatorially, by
	Zeilberger and Bressoud \cite{bress1985} in 1985. In 2004,
	Gessel and Xin \cite{gessxin2006} gave a very different proof 
	using formal Laurent series. For related constant term identity,
	one can refer to
	\cite{goul1985, kade1998,  sill20061, sill20062,
		stem1987}.
	
A tournament $T$ on $n$ vertices (or on the set $\{1 \cdots,n\}$) is a set of ordered pairs $(i,j)$ such that $1 \le i\neq j \le n$ and $(i, j)\in T$ if and only if $(j, i)\notin T$.
Equivalently, $T$ can be thought of as a directed graph with vertices $1,\dots,n$ and edges directed from $i$ to $j$ for all $(i, j)\in T$. Thus we write $i\rightarrow j$ if $(i, j)\in T$.
The tournament $T$ is transitive if the relation $\rightarrow$ is transitive. Equivalently, $T$ is transitive if it contains no cycles $(i\rightarrow j \rightarrow k \rightarrow i)$. Otherwise, $T$ is nontransitive.
Let $S$ be a tournament on a subset of $\{1,\cdots,n\}$. Define $T\overline{S}$ to be the tournament on $\{1,\cdots,n\}$ such that if $(i,j)\notin S$, then $(i,j)\in T\overline{S}$, otherwise $(j,i)\in T\overline{S}$. 

 By extending the method used by Zeilberger-Bressoud  \cite[Theorem 2.9]{bress1985}, Bressoud and Goulden generalized the $q$-Dyson constant identity \cite{goul1985}.
	\begin{thm}\label{t-main-thm}
		Let $T$ be a subset of $\{(i,j)\mid 1\le i<j\le n\}$.
		\begin{multline}\label{e-main}
		  	\CT_{\mathbf{x}}\prod_{(i,j)\in T}\frac{x_j}{x_i}\prod_{1\leq i<j\leq n}\Big(\frac{x_i}{x_j}\Big)_{a_i}\Big(\frac{qx_j}{x_i}\Big)_{a_j-1}\\=\begin{cases}
			0& \text{if $E\overline{T}$ is nontransitive,}\\ (-1)^{|T|}\frac{(q)_{a_1\cdots+a_n}}{(q)_{a_1}(q)_{a_2}\cdots(q)_{a_n}}\prod_{i=1}^{n}\frac{1-q^{a_i}}{1-q^{\Psi_i}}
 & \text{if $E\overline{T}$ is transitive},
			\end{cases}  
		\end{multline}

where $|T|$ stands for the number of elements of $T$ and $\Psi_k=\sum_{i=1}^k a_{\sigma(i)}$.
	\end{thm}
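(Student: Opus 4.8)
The plan is to prove \eqref{e-main} by induction on $n$, at each stage eliminating one variable with a generalization of Gessel--Xin's formal Laurent series technique; the tournament $E\overline Q$ serves as the bookkeeping device that records the outcome of each elimination. Write $G=\prod_{(i,j)\in Q}\frac{x_j}{x_i}\,\prod_{1\le i<j\le n}(x_i/x_j)_{a_i}(qx_j/x_i)_{a_j-1}$, which is a Laurent polynomial, so that $\CT_{\mathbf x}G$ is the genuine $x_1^0\cdots x_n^0$ coefficient and may be computed as an iterated constant term. The base case $n=1$ is the empty product, where both sides equal $1$; the case $n=2$ is already instructive, since expanding the two Pochhammer symbols by the $q$-binomial theorem turns $\CT_{x_1,x_2}$ into the $q$-Vandermonde sum $\sum_i\qbinom{a_1}{i}\qbinom{a_2-1}{i}q^{i^2}=\qbinom{a_1+a_2-1}{a_1}$, which is exactly the right-hand side when $Q=\emptyset$ (and $E\overline Q$ transitive). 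This already exhibits the two phenomena to be controlled in general: a single-variable constant term collapses to a closed-form $q$-summation, and the answer is governed by the linear order that a transitive tournament puts on the vertices.

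First I would single out one variable and record which factors contain it: $x_n$, say, occurs only in the factors $(x_i/x_n)_{a_i}(qx_n/x_i)_{a_n-1}$ for $i<n$ and in the monomials $x_n/x_i$ for those $i$ with $(i,n)\in Q$. Rewriting this $x_n$-dependent part as a proper rational function of $x_n$ and extracting $\CT_{x_n}$ by partial fractions à la Gessel--Xin should express $\CT_{\mathbf x}G$ as an explicit finite sum of constant terms of the same shape on the $n-1$ variables $x_1,\dots,x_{n-1}$, with one parameter decremented and with $Q$ replaced by a smaller edge set. The combinatorial content of this step is exactly a vertex deletion in $E\overline Q$: when $E\overline Q$ is transitive I would delete its sink $\sigma(n)$, which by the partial-fraction extraction contributes the single scalar factor $\frac{1-q^{a_{\sigma(n)}}}{1-q^{\sigma_n}}$ (the $n=2$ computation is the prototype, where deleting the sink produces $\frac{1-q^{a_2}}{1-q^{a_1+a_2}}$) and leaves the problem for the transitive tournament $E\overline Q$ with $\sigma(n)$ removed, whose induced partial sums are $\sigma_1,\dots,\sigma_{n-1}$. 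Feeding in the inductive hypothesis and multiplying by this factor reproduces $\frac{(q)_{a_1+\cdots+a_n}}{(q)_{a_1}\cdots(q)_{a_n}}\prod_{i=1}^n\frac{1-q^{a_i}}{1-q^{\sigma_i}}$, while the sign $(-1)^{|Q|}$ is accumulated one unit per monomial $x_j/x_i$ of $\prod_{(i,j)\in Q}\frac{x_j}{x_i}$.

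The main obstacle is the vanishing assertion: if $E\overline Q$ is nontransitive it contains a directed $3$-cycle, and I must show this forces $\CT_{\mathbf x}G=0$. The plan is to argue that for such $Q$ there is no admissible elimination order producing a nonzero residue --- concretely, that a cyclic triple makes the relevant one-variable series proper with vanishing constant term, so that the recursion of the previous paragraph returns $0$ identically; equivalently one may seek a sign-reversing involution on the surviving monomials of $G$ induced by reversing the cyclic triple. The delicate point in either route is controlling the effect of the extra monomials $\prod_{(i,j)\in Q}\frac{x_j}{x_i}$ on the properness and degree bookkeeping, since it is precisely these monomials that shift the exponent-balance condition and thereby decide whether $E\overline Q$ is transitive; reconciling this shift with the Gessel--Xin partial-fraction extraction is where the real work lies, and is the step I expect to require the most care.
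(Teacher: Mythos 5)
Your plan founders at its central step: the claimed variable-elimination recursion. With all $a_i\ge 0$, the $x_n$-dependent part of $G$, namely $x_n^{c}\prod_{i<n}(x_i/x_n)_{a_i}(qx_n/x_i)_{a_n-1}$, is a Laurent \emph{polynomial} in $x_n$, not a proper rational function, so there is no partial-fraction decomposition to take a residue from; extracting $\CT_{x_n}$ instead produces (via the $q$-binomial theorem, exactly as in Lemma~\ref{pd} of the paper) a sum over compositions $\mathbf{k}$ of products of $q$-binomial coefficients times constant terms of Laurent polynomials in $x_1,\dots,x_{n-1}$ that do not reassemble into the $(n-1)$-variable $q$-Dyson product ``with one parameter decremented.'' Your own $n=2$ check already undercuts the claim: it is settled by a $q$-Vandermonde \emph{summation}, not a residue, and even at the level of bookkeeping the factor relating the $n$- and $(n-1)$-variable right-hand sides is $\frac{(q)_{a_1+\cdots+a_n}}{(q)_{a_1+\cdots+a_{n-1}}(q)_{a_n}}\cdot\frac{1-q^{a_n}}{1-q^{\sigma_n}}$, not the single scalar $\frac{1-q^{a_{\sigma(n)}}}{1-q^{\sigma_n}}$ you posit. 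The rational, proper structure that powers the Gessel--Xin partial-fraction machinery appears only after a parameter is specialized to a negative integer (in the paper, $a_0=-b$ turns $\left(\frac{x_0}{x_j}\right)_{-b}$ into denominator factors); consequently that machinery proves \emph{vanishing of a polynomial in $q^{a_0}$ at finitely many points}, and does not furnish a recursion on $n$ at generic nonnegative parameters. No such elimination recursion is known for $q$-Dyson-type products (Good's recursion proves Dyson but fails for the $q$-analogue), so the step you describe with ``should express'' is precisely the missing proof. The nontransitive half --- the real content of the theorem --- is likewise left as two untested alternatives (a properness claim, or a sign-reversing involution) with no argument for either.

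For comparison: the paper does not prove Theorem~\ref{t-main-thm} at all; it quotes it from Bressoud--Goulden, whose proof extends the Zeilberger--Bressoud combinatorial method, and notes the equivalent result of K\'arolyi--Lascoux--Warnaar obtained by multivariable Lagrange interpolation. The in-paper analogue of what you are attempting --- used to prove Theorems~\ref{mainthm2} and~\ref{mainthm3}, which contain statements of exactly this shape --- runs along genuinely different lines: fix $a_1,\dots,a_n$, adjoin $x_0$ and $a_0$, show the constant term is a polynomial of bounded degree in $q^{a_0}$ (Lemma~\ref{pd}), prove it vanishes at sufficiently many specializations $a_0=-b$ by iterated constant-term extraction in which properness in $x_{r_s}$ is certified by tournament degree counts (Lemmas~\ref{lem-import2} and~\ref{lem-lead3}), with nontransitivity entering through the bound $|R_T|\le n-2$ of Lemma~\ref{non} to guarantee enough zero points, and finally pin down the polynomial with one extra clean evaluation at $b=0$, transporting the answer to general transitive $Q$ via Lemmas~\ref{lem-main1} and~\ref{sigma}. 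If you wish to salvage your proposal, that polynomiality-in-$q^{a_0}$ route is the one to follow; the induction on $n$ as you have framed it will not close.
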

In 2015, using multivariable Lagrange
interpolation  K\'{a}rolyi,  Lascoux and  Warnaar 
\cite[Theorem 1.2]{karo2015} gave an identity equivalent to Theorem~\ref{t-main-thm} . 

Let $T$ be transitive tournament on $\{1,\cdots,n\}$ and let $\sigma$ be a permutation of $\{1,\cdots,n\}$. We say that $\sigma$ is the winner permutation for $T$ if $\sigma(1)\rightarrow \sigma(2)\rightarrow \cdots \rightarrow \sigma(n)$ in $T$.
 
%
In this paper, we obtain a closed form formula for two coefficients of $D_n(\mathbf{x},\mathbf{a},q)$.
\begin{thm}[Main theorem 1]\label{mainthm2}
Let $n$ be an integer such that $n\geq 2$ and $T_1$ be a subset of $\{(i,j)\mid2\leq i<j \leq n\}$.
Then
	\begin{multline}
	 \CT_{\mathbf{x}}\frac{x_0}{x_1}\prod_{(i,j)\in T_1}\frac{x_j}{x_i} D_n(\mathbf{x},\mathbf{a},q)\\
=\begin{cases}
	 0&\text{$E\overline{T_1}$ is nontransitive,}\\ (-1)^{|T_1|}\Big(\frac{q^{a_{\sigma(1)}}-q^{a_{\sigma(2)}}}{1-q^{a_0+a_{\sigma(2)}}}\Big)\frac{(q)_{a_0+a_1\cdots+a_n}}{(q)_{a_0}(q)_{a_1}\cdots(q)_{a_n}}
	 \prod_{i=1}^{n}\frac{1-q^{a_i}}{1-q^{a_0+{\Psi_i}}} & \text{ $E\overline{T_1}$ is transitive,}
	 \end{cases}
	\end{multline}
	$\sigma$ is the winner permutation for $E\overline{T_1}$ and $\Psi_k=\sum_{i=1}^k a_{\sigma(i)}$.
	
	\end{thm}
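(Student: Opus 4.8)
The plan is to peel off the variable $x_0$ by writing the total constant term as an iterated one, $\CT_{\mathbf{x}}=\CT_{x_1,\dots,x_n}\CT_{x_0}$ (legitimate in the iterated-Laurent-series framework underlying the Gessel--Xin method), and then to push the remaining $n$-variable problem onto the Bressoud--Goulden evaluation of Theorem~\ref{t-main-thm}. First I would factor the $q$-Dyson product as
\[
D_n(\mathbf{x},\mathbf{a},q)=\Phi(x_0)\,P_n,\qquad
\Phi(x_0)=\prod_{j=1}^{n}\Big(\frac{x_0}{x_j}\Big)_{a_0}\Big(\frac{qx_j}{x_0}\Big)_{a_j-1},
\]
where $P_n=\prod_{1\le i<j\le n}\big(\frac{x_i}{x_j}\big)_{a_i}\big(\frac{qx_j}{x_i}\big)_{a_j-1}$ is precisely the product in Theorem~\ref{t-main-thm}. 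Since $x_0$ occurs only in the prefactor $x_0/x_1$ and in $\Phi$, the whole computation collapses onto the inner constant term
\[
H(x_1,\dots,x_n):=\CT_{x_0}\Big[\frac{x_0}{x_1}\,\Phi(x_0)\Big]=\frac{1}{x_1}\,\big[x_0^{-1}\big]\Phi(x_0),
\]
so the first concrete task is to extract the coefficient of $x_0^{-1}$ in the Dyson factor $\Phi$.

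To compute $H$ I would run the generalized Laurent-series method: expand $\Phi(x_0)$ in the prescribed region and read off $[x_0^{-1}]\Phi$ by the partial-fraction/residue bookkeeping that drives Gessel--Xin's technique. I expect $H$ to organize into a principal piece, proportional to a single Dyson-type product on $x_1,\dots,x_n$ with all partial sums shifted by $a_0$ (this shift is visible already in the target denominators $1-q^{a_0+\sigma_i}$), together with a few correction monomials; the corrections are exactly what will later be responsible for the extra rational factor $\frac{q^{a_{\sigma(1)}}-q^{a_{\sigma(2)}}}{1-q^{a_0+a_{\sigma(2)}}}$.

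Next I would substitute $H$ back and evaluate
\[
\CT_{x_1,\dots,x_n}\Big[\prod_{(i,j)\in Q_1}\frac{x_j}{x_i}\;P_n\;H\Big]
\]
contribution by contribution, reducing each to a constant term of the Bressoud--Goulden shape on the variables $x_1,\dots,x_n$ (with $a_0$ entering only through the shift produced above), so that Theorem~\ref{t-main-thm} applies. The transitivity dichotomy is then inherited from that theorem: because $Q_1\subseteq\{(i,j)\mid 2\le i<j\le n\}$ never touches the vertex $1$, vertex $1$ beats every other vertex of $E\overline{Q_1}$, which forces $\sigma(1)=1$; and the nontransitive vanishing of Theorem~\ref{t-main-thm} (the cancellation created by a $3$-cycle among $\{2,\dots,n\}$) carries over to give the stated vanishing whenever $E\overline{Q_1}$ is nontransitive.

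Finally, in the transitive case I would sum the surviving contributions. The common Bressoud--Goulden factor yields
\[
(-1)^{|Q_1|}\frac{(q)_{a_0+a_1+\cdots+a_n}}{(q)_{a_0}(q)_{a_1}\cdots(q)_{a_n}}\prod_{i=1}^{n}\frac{1-q^{a_i}}{1-q^{a_0+\sigma_i}},
\]
and the collected correction terms must telescope into $\frac{q^{a_{\sigma(1)}}-q^{a_{\sigma(2)}}}{1-q^{a_0+a_{\sigma(2)}}}$. The hard part will be exactly this last step: tracking the several Bressoud--Goulden contributions coming from the correction monomials of $H$ and proving that, once the winner-permutation combinatorics is accounted for, they collapse to one compact factor involving only $a_0$ and the top two winners $\sigma(1),\sigma(2)$. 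A useful sanity check and alternative organizing principle is to settle the base case $Q_1=\emptyset$ (where $\sigma=\mathrm{id}$ and the factor reads $\frac{q^{a_1}-q^{a_2}}{1-q^{a_0+a_2}}$) and then to propagate the formula by an edge-flip recursion on $Q_1$ in the spirit of Zeilberger--Bressoud, each admissible flip among $\{2,\dots,n\}$ contributing a sign $-1$ and the corresponding transposition in $\sigma$, while any flip creating a $3$-cycle kills the constant term.
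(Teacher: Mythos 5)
There is a genuine gap in your main route. The pivotal step---computing $H=\CT_{x_0}\bigl[\tfrac{x_0}{x_1}\Phi(x_0)\bigr]$ and then reducing each resulting contribution ``to a constant term of the Bressoud--Goulden shape'' so that Theorem~\ref{t-main-thm} applies---does not go through as described. Expanding $\Phi(x_0)$ by the $q$-binomial theorem (exactly as in the proof of Lemma~\ref{pd}) shows that $[x_0^{-1}]\Phi$ is a sum over compositions $k_1+\cdots+k_n=|a|-n-1$ of terms of the form $\prod_{j=1}^{n}C(k_j)\qbinom{a_0+a_j-1}{k_j}x_j^{a_j-k_j-1}$. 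These monomials have exponents of arbitrary magnitude (bounded by the $a_j$), not the squarefree edge-product form $\prod_{(i,j)\in Q}x_j/x_i$ with $Q$ a \emph{set} of pairs that Theorem~\ref{t-main-thm} requires; moreover $a_0$ enters through the genuinely polynomial factors $\qbinom{a_0+a_j-1}{k_j}$ in $q^{a_0}$, not merely as a shift $\sigma_i\mapsto a_0+\sigma_i$ in the partial sums. So the claimed structure of $H$ (``principal Dyson piece plus a few correction monomials'') is asserted rather than derived, and the telescoping of the corrections into $\frac{q^{a_{\sigma(1)}}-q^{a_{\sigma(2)}}}{1-q^{a_0+a_{\sigma(2)}}}$---which you yourself flag as the hard part---is exactly what is missing. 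The nontransitive case is glossed over in the same way: multiplying $P_n$ by the monomials of $H$ changes the relevant tournament, so the vanishing in Theorem~\ref{t-main-thm} does not simply ``carry over''; in the paper this case (Theorem~\ref{eq1}) is the most delicate part of the whole argument.

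For contrast, the paper never computes $H$ at all. By Lemma~\ref{pd} the left-hand side is a polynomial of degree at most $|a|-n-1$ in $q^{a_0}$; it is then evaluated at $a_0=-b$, where the Dyson factor collapses to the pure pole product $Q_1(b)$, and iterated partial-fraction extraction (Lemma~\ref{lem-almostprop} together with Lemma~\ref{lem-lead3}) combined with the tournament degree estimates (Lemmas~\ref{lem-import2} and~\ref{non}) produces at least $|a|-n+1$ zeros, forcing the nontransitive constant term to vanish; the transitive case is anchored at $b=0$ via Theorem~\ref{t-main-thm} (Lemma~\ref{x1}) and completed by the same zero-counting argument (Lemma~\ref{x12}). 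Your closing remark---prove the base case $Q_1=\emptyset$ and propagate by an edge-flip recursion, each flip contributing a sign $-1$---is essentially the paper's Lemma~\ref{lem-main1} together with Lemma~\ref{sigma}, and is indeed how the paper deduces the general transitive case; but note that this only \emph{reduces} the theorem to $Q_1=\emptyset$, whose evaluation still requires the polynomial-in-$q^{a_0}$ zero-point machinery you have not supplied. (Your observation that vertex $1$ beats every other vertex of $E\overline{Q_1}$, forcing $\sigma(1)=1$, is correct and consistent with the stated formula.)
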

	\begin{thm}[Main theorem 2]\label{mainthm3}
	Let n be an integer such that $n\geq 3$ and $T_2$ is a subset of $\{(i,j)\mid 3\leq i<j\leq n) \}$. Then
	\begin{multline}
	 \CT_{\mathbf{x}}\frac{x_0}{x_2}\prod_{(i,j)\in T_2}\frac{x_j}{x_i} D_n(\mathbf{x},\mathbf{a},q)\\
	 =\begin{cases}
	 0 &\text{$E\overline{T_2}$ is nontransitive,}\\
	 (-1)^{|T_2|}\Big(\frac{(1+q^{a_{\sigma(1)}})(q^{a_{\sigma(2)}}-q^{a_{\sigma(3)}})}{1-q^{a_0+a_{\sigma(1)}+a_{\sigma(3)}}}\Big)
	 \frac{(q)_{a_0+a_1\cdots+a_n}}{(q)_{a_0}(q)_{a_1}\cdots(q)_{a_n}}\prod_{i=1}^{n}\frac{1-q^{a_i}}{1-q^{a_0+\Psi_i}} & \text{ $E\overline{T_2}$ is transitive,}
	 \end{cases}
	\end{multline}
	where 
	$\sigma$ is the winner permutation for $E\overline{T_2}$ and $\Psi_k=\sum_{i=1}^k a_{\sigma(i)}$.
	 
	\end{thm}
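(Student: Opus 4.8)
The plan is to follow the strategy behind Main theorem~1 (Theorem~\ref{mainthm2}) and to extend the Gessel--Xin Laurent series method, singling out the variable $x_0$ and taking the constant term in $x_0$ first. I would factor $D_n(\mathbf{x},\mathbf{a},q)=P_0(x_0)\,D'$, where $P_0(x_0):=\prod_{j=1}^{n}\big(\tfrac{x_0}{x_j}\big)_{a_0}\big(\tfrac{qx_j}{x_0}\big)_{a_j-1}$ collects every factor involving $x_0$ and $D'$ is the $q$-Dyson product on $x_1,\dots,x_n$ that appears in Theorem~\ref{t-main-thm}. Since $x_0$ occurs only in $P_0$ and in the prefactor $x_0/x_2$, the target reduces to
\[
\CT_{x_1,\dots,x_n}\Big[\tfrac{1}{x_2}\Big(\prod_{(i,j)\in Q_2}\tfrac{x_j}{x_i}\Big)D'\,G\Big],\qquad G:=\CT_{x_0}\big(x_0\,P_0(x_0)\big)=[x_0^{-1}]P_0 .
\]

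The next step is the seed computation of $G=[x_0^{-1}]P_0$. Viewing $P_0$ as, for each $j$, a polynomial in $x_0$ times a factor in $x_0^{-1}$, extracting the coefficient of $x_0^{-1}$ is a residue computation in the ring of iterated Laurent series, which I would organize by the $q$-binomial theorem; after dividing by $x_2$ this presents $\tfrac{1}{x_2}G$ as an explicit finite sum of degree-$0$ monomials in $x_1,\dots,x_n$ whose coefficients are rational in $q^{a_0},q^{a_1},\dots,q^{a_n}$. The essential new feature, compared with Theorem~\ref{mainthm2}, is that the prefactor $x_0/x_2$ skips the variable $x_1$: the extraction now couples the three distinguished vertices $0,1,2$ instead of an adjacent pair, and this is precisely what should generate the factor $1+q^{a_{\sigma(1)}}$ and the three-term denominator $1-q^{a_0+a_{\sigma(1)}+a_{\sigma(3)}}$ in the final formula.

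Then I would reassemble and pass to the $n$-variable constant term. Each monomial produced above, multiplied by $\prod_{(i,j)\in Q_2}\tfrac{x_j}{x_i}$ and $D'$, yields a shifted $q$-Dyson constant term controlled by a tournament built from $E\overline{Q_2}$. Because $Q_2\subseteq\{(i,j):3\le i<j\le n\}$ leaves every edge at vertices $1$ and $2$ unreversed, in $E\overline{Q_2}$ vertex $1$ beats all others and vertex $2$ beats all but $1$; hence any cycle must lie inside $\{3,\dots,n\}$ and, if present, survives into each term, so the tournament evaluation (Theorem~\ref{t-main-thm} and the argument behind it) forces every contribution to vanish and the constant term is $0$. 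In the transitive case the same observation forces $\sigma(1)=1$ and $\sigma(2)=2$, with $\sigma(3)$ the winner of the sub-tournament on $\{3,\dots,n\}$; collecting the surviving terms, each carries the common factor $(-1)^{|Q_2|}\frac{(q)_{a_0+a_1+\cdots+a_n}}{(q)_{a_0}(q)_{a_1}\cdots(q)_{a_n}}\prod_{i=1}^{n}\frac{1-q^{a_i}}{1-q^{a_0+\sigma_i}}$, and the $q$-coefficients from $G$ must sum to the stated prefactor.

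I expect the main obstacle to be exactly this seed computation together with the final summation. Producing $[x_0^{-1}]P_0$ in closed form, and recombining contributions that are now spread over the three coupled vertices $0,1,2$ into the compact prefactor $\frac{(1+q^{a_{\sigma(1)}})(q^{a_{\sigma(2)}}-q^{a_{\sigma(3)}})}{1-q^{a_0+a_{\sigma(1)}+a_{\sigma(3)}}}$, is the delicate part; a further subtlety is that the intermediate monomials need not be simple edge reversals $\prod\tfrac{x_j}{x_i}$, so the argument genuinely requires the tournament-indexed constant-term evaluation developed for the generalized Gessel--Xin method rather than a direct appeal to Theorem~\ref{t-main-thm}. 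By contrast, the vanishing statement and the reduction $\sigma(1)=1,\ \sigma(2)=2$ are routine once the tournament bookkeeping is in place.
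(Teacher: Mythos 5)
Your overall frame (extract the coefficient of $x_0^{-1}$ from the $x_0$-part of $D_n$ first, then evaluate the remaining $n$-variable constant terms one by one) is not the paper's method, and it has a genuine gap at precisely the two places you flag as merely ``delicate''. First, the termwise vanishing claim in the nontransitive case is false: expanding $[x_0^{-1}]P_0$ (as in the proof of Lemma~\ref{pd}) produces monomials $\prod_{j}x_j^{a_j-k_j-1}$ ranging over all compositions $k_1+\cdots+k_n=|a|-1-n$, so each resulting term is a \emph{general} coefficient of the $q$-Dyson product $D'$, not a coefficient of the special form $\CT_{\mathbf{x}}\prod_{(i,j)\in Q}(x_j/x_i)\,D'$ covered by Theorem~\ref{t-main-thm}. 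Such general (``first-layer''-type) coefficients do not vanish individually when $E\overline{Q_2}$ is nontransitive --- only the full sum does --- and no closed form for them is known; computing a few special families of such coefficients is exactly the point of this paper and of its predecessor by Lv, Xin and Zhou. Consequently your final step, summing unknown seed coefficients against unknown constant terms to produce $(1+q^{a_{\sigma(1)}})(q^{a_{\sigma(2)}}-q^{a_{\sigma(3)}})/(1-q^{a_0+a_{\sigma(1)}+a_{\sigma(3)}})$, has no method behind it: you have deferred the entire content of the theorem to it.

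The paper never evaluates the $x_0$-expansion at all; the expansion is used only once, in Lemma~\ref{pd}, to show the left-hand side is a polynomial in $q^{a_0}$ of degree at most $|a|-n-1$. Equality is then proved by interpolation at the specializations $a_0=-b$: there $Q_2(b)$ becomes proper in $x_0$, and iterated partial-fraction extraction (Lemma~\ref{lem-almostprop}) with the substitution operators $E_{\mathbf{r},\mathbf{k}}$, together with the degree bookkeeping of Lemmas~\ref{lem-import1}, \ref{lem-import2} and \ref{lem-lead3}, shows $\CT_{\mathbf{x}}Q_2(b)=0$ for every $b\in\{1,\dots,|a|\}$ with $b\neq\sum_{r\in R}a_r$ for $R\in R_2$; Lemma~\ref{non} bounds $|R_2|<n$, giving at least $|a|-n+1$ zeros and hence identical vanishing in the nontransitive case (Theorem~\ref{eq2}). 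In the transitive case the paper first reduces to $Q_2=\emptyset$ by flipping edges with Lemma~\ref{lem-main1} and relabelling with Lemma~\ref{sigma} (your observation that $\sigma(1)=1$ and $\sigma(2)=2$ is correct, but it enters only through this reduction), then observes via Lemma~\ref{zp} that the claimed right-hand side is a polynomial in $q^{a_0}$ vanishing at exactly the same points, and pins down the remaining degree of freedom with the single extra evaluation $b=0$, computed from Bressoud--Goulden in Lemma~\ref{x2} and propagated in Lemma~\ref{x02}. To salvage your plan you would need closed forms for arbitrary coefficients of $D'$, which do not exist; the interpolation-in-$q^{a_0}$ argument is what lets the paper bypass them.
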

	
    \begin{cor} \label{main3}
   Let $\sigma$ be a permutation  of  $(1,2,\cdots,n)$.\\
	$\bf{(i)}$ For $n$ be a positive integer such that $n\geq 3$, 
	\begin{multline}
	    \CT_{\mathbf{x}}\frac{x_{\sigma(1)}}{x_{\sigma(2)}}\prod_{(i,j)\in T}\frac{x_i}{x_j}
	\prod_{1\leq i<j\leq n}\Big(\frac{x_i}{x_j}\Big)_{a_i}\Big(\frac{qx_j}{x_i}\Big)_{a_j-1}\\
	= (-1)^{|T|}\Big(\frac{q^{a_{\sigma(2)}}-q^{a_{\sigma(3)}}}{1-q^{a_{\sigma(1)}+a_{\sigma(3)}}}\Big)\frac{(q)_{a_1+a_2+\cdots+a_n}}{(q)_{a_1}(q)_{a_2}\cdots (q)_{a_n}} \prod_{i=1}^n\frac{1-q^{a_{\sigma(i)}}}{1-q^{\Psi_i}}. \\ 
	\end{multline}

	$\bf{(ii)}$ For $n$ be a positive integer such that $n\geq 4$, 
    \begin{multline}
         \CT_{\mathbf{x}}\frac{x_{\sigma(1)}}{x_{\sigma(3)}}\prod_{(i,j)\in T}\frac{x_i}{x_j}
	\prod_{1\leq i<j\leq n}\Big(\frac{x_i}{x_j}\Big)_{a_i}\Big(\frac{qx_j}{x_i}\Big)_{a_j-1}\\
	= (-1)^{|T|}
	\Big(\frac{(1+q^{a_{\sigma(2)}})(q^{a_{\sigma(3)}}-q^{a_{\sigma(4)}})}{1-q^{a_{\sigma(1)}+a_{\sigma(2)}+a_{\sigma(4)}}}\Big)\frac{(q)_{a_1+a_2+\cdots+a_n}}{(q)_{a_1}(q)_{a_2}\cdots (q)_{a_n}} \prod_{i=1}^n\frac{1-q^{a_{\sigma(i)}}}{1-q^{\Psi_i}},
    \end{multline}
 
	where
	$T:=\{(\sigma(i),\sigma(j))\mid i<j\quad and \quad \sigma(i)>\sigma(j)\}\quad
	and\quad \Psi_k :=\sum_{i=1}^k a_{\sigma(i)}.$
	\end{cor}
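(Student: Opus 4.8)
The plan is to derive Corollary~\ref{main3} directly from the two main theorems by a change of variables, so the real content is already contained in Theorems~\ref{mainthm2} and~\ref{mainthm3}. I would begin by recording the precise dictionary between the two formulations. In the corollary the Dyson-style product runs over indices $1\le i<j\le n$ and the distinguished ratio is $x_{\sigma(1)}/x_{\sigma(2)}$ (respectively $x_{\sigma(1)}/x_{\sigma(3)}$), whereas in the main theorems the product is $D_n(\mathbf{x},\mathbf{a},q)$ over $0\le i<j\le n$ with distinguished ratio $x_0/x_1$ (respectively $x_0/x_2$). The key observation is that $D_n(\mathbf{x},\mathbf{a},q)=\prod_{0\le i<j\le n}(x_i/x_j)_{a_i}(qx_j/x_i)_{a_{j-1}}$ specializes, after relabeling, to the product appearing in the corollary once one of the variables is singled out to play the role of $x_0$.

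First I would fix the permutation $\sigma$ and relabel the variables so that $\sigma$ becomes the identity: set $y_k:=x_{\sigma(k)}$ and $b_k:=a_{\sigma(k)}$. Under this relabeling the condition $\sigma(i)>\sigma(j)$ with $i<j$ that defines $Q$ corresponds exactly to the inversions of $\sigma$, so the factor $\prod_{(i,j)\in Q}x_i/x_j$ becomes a monomial recording those inversions, and the sign $(-1)^{|Q|}$ is the sign of $\sigma$. The effect of multiplying $D_n$ by this inversion monomial is to reverse the orientation of precisely the edges of the reference tournament $E$ corresponding to inversions, which is the combinatorial meaning of the operation $E\overline{Q}$; thus the transitive/nontransitive dichotomy of the main theorems collapses, in the corollary's setting, to the always-transitive case, which is why no case distinction appears in the statement. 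I would check that with $\sigma$ absorbed into the labels, the distinguished ratio $x_{\sigma(1)}/x_{\sigma(2)}$ becomes $x_0/x_1$ in the role played by the winner of the transitive tournament, matching Theorem~\ref{mainthm2} exactly, and similarly $x_{\sigma(1)}/x_{\sigma(3)}$ matches Theorem~\ref{mainthm3}.

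With the dictionary in place, part $\mathbf{(i)}$ follows by applying Theorem~\ref{mainthm2} to the relabeled product with $Q_1$ chosen so that $E\overline{Q_1}$ is the transitive tournament whose winner permutation is $\sigma$; the prefactor $(q^{a_{\sigma(1)}}-q^{a_{\sigma(2)}})/(1-q^{a_0+a_{\sigma(2)}})$ of that theorem transcribes, after identifying the role of $a_0$ with $a_{\sigma(1)}$, into the stated factor $(q^{a_{\sigma(2)}}-q^{a_{\sigma(3)}})/(1-q^{a_{\sigma(1)}+a_{\sigma(3)}})$, and the product $\prod_{i=1}^n(1-q^{a_i})/(1-q^{a_0+\sigma_i})$ becomes $\prod_{i=1}^n(1-q^{a_{\sigma(i)}})/(1-q^{\sigma_i})$ once the index shift induced by singling out the winner is taken into account. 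Part $\mathbf{(ii)}$ is identical in structure, invoking Theorem~\ref{mainthm3} and transcribing its prefactor $(1+q^{a_{\sigma(1)}})(q^{a_{\sigma(2)}}-q^{a_{\sigma(3)}})/(1-q^{a_0+a_{\sigma(1)}+a_{\sigma(3)}})$.

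The main obstacle will be bookkeeping rather than conceptual: I expect the delicate step to be verifying that the index reindexing caused by elevating the winner $\sigma(1)$ to the role of $x_0$ shifts the partial sums $\sigma_i$ and the $a$-arguments consistently, so that the transcribed prefactor and the transcribed product match the claimed formulas term by term. In particular I would take care that the off-by-one in the exponent of $(qx_j/x_i)_{a_j-1}$ versus the $a_{j-1}$ appearing in $D_n$ is handled correctly, and that the winner permutation of $E\overline{Q_1}$ (respectively $E\overline{Q_2}$) really is the prescribed $\sigma$; once these alignments are confirmed, the corollary is an immediate specialization of the two theorems and requires no further constant-term computation.
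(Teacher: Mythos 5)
Your proposal is correct and is essentially the paper's own argument: the paper proves Corollary~\ref{main3} by absorbing the inversion monomial through Lemma~\ref{lem-main1} (each flipped edge contributing a factor $-1$, so the product becomes $(-1)^{|Q|}$ times the $q$-Dyson style product along the transitive tournament with winner order $\sigma(1)\to\cdots\to\sigma(n)$), relabeling the variables by Lemma~\ref{sigma} so that $\sigma$ becomes the identity, and then evaluating with $x_{\sigma(1)},a_{\sigma(1)}$ in the role of $x_0,a_0$ --- precisely your dictionary, including the shift in variable count that explains the hypotheses $n\geq 3$ and $n\geq 4$. The one adjustment you should make is to cite the independently proved $Q_1=\emptyset$ and $Q_2=\emptyset$ evaluations (Lemmas~\ref{x12} and~\ref{x02}) rather than the full Theorems~\ref{mainthm2} and~\ref{mainthm3}, since in the paper's logical order the transitive cases of those theorems are themselves deduced from this corollary, and after your relabeling the empty-$Q$ instances are all your argument actually uses.
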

	
	The proofs of Theorems~\ref{mainthm2} and \ref{mainthm3} are similar to the Gessel-Xin proof of the $q$-Dyson term constant identity \cite{gessxin2006}. The basic idea is the well-known fact that to establish the equality of
    two polynomials of degree at most $d$, it is sufficient to prove that they are equal at
    $d + 1$ points. As is the case generally, points at which the polynomials vanish
     are most easily dealt with.
	
	For fixed $a_1,a_2,\cdots,a_n$, the constant term can be regarded as a polynomial of degree $d$ in variables $q^{a_0}$. Then we use the Gessel-Xin technique to check that both sides have zeroes at the same $d$ values of $q^{a_0}$
	and that the case $q^{a_0}=1$ reduces 
	to the identity with $n$ replaced by $n-1$. As in the Gessel-Xin proof, the difficulty lies in verifying that the constant term
	is zero at the desired values. The fact that the product is multiplied $x_0/x_1$ or
	$x_0/x_2$ forces several modifications to the argument employed by Gessel and
	Xin, including the need to put an upper bound on the degree of the Laurent
	polynomial rather than being able to compute its exact degree.

	This paper is organized as follows. In section 2, we estimate the constant term degree in $q^{a_0}$ on the left-side and calculate the degree of $q^{a_0}$ and its zero points on right-side for Theorem \ref{mainthm2} and Theorem \ref{mainthm3}. In section 3, we introduce some basic notions and lemmas of \cite{gessxin2006} in a
    generalized form.
    At the same time, we describe the field of iterated Laurent series and the use
    of partial fraction decompositions as main tools for estimating constant terms.
    In section 4, we introduce new techniques to prove the nontransitive cases of Theorems \ref{mainthm2} and \ref{mainthm3}. In section 5, we first prove these theorems when $T_1$ and
    $T_2$ are empty, then inductively build up the general case. Finally, we prove and
    apply Corollary\ref{main3}.

	\section{Basic Facts}
	Dyson's conjecture, Andrews' $q$-Dyson conjecture, and their
	relatives are all constant terms of certain Laurent polynomials.
	In fact, larger rings and fields will be encountered when evaluating
	them. We use the follow  notation from \cite{gessxin2006}. In
	order to prove our Main Theorem 1 and 2, we make some generalizations
	that need detailed explanation.

	\begin{lem}\label{pd}
		Let $a_1,\ldots,a_n$ be  nonnegative integers
		and $k\leq |a|$, $k\in \mathbb{Z}$ and let $L(x_1,\ldots,x_n)$ be a Laurent polynomial with $L(x_1,\ldots,x_n)$ independent of
		${a_0}$ and $x_0$. Then   the constant term
		\begin{align}\label{p1}
		\CT_\mathbf{x} x_0^k L(x_1,\dots,x_n) D_n(\mathbf{x},\mathbf{a},q)
		\end{align}
		is a polynomial of degree at most $|a|-k-n$ in $q^{a_0}$, where $|a|=\sum_{i=1}^na_i$.
	\end{lem}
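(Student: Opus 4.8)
The plan is to isolate the entire dependence on $a_0$ into a single explicit factor and then read off the degree in $q^{a_0}$ from the $q$-binomial expansion of that factor. First I would split the $q$-Dyson style product according to whether the index $0$ is involved, writing
$$D_n(\mathbf x,\mathbf a,q)=P\cdot M\cdot \tilde D,\qquad P=\prod_{j=1}^n\Big(\frac{x_0}{x_j}\Big)_{a_0},\quad M=\prod_{j=1}^n\Big(\frac{qx_j}{x_0}\Big)_{a_j-1},$$
where $\tilde D=\prod_{1\le i<j\le n}(x_i/x_j)_{a_i}(qx_j/x_i)_{a_j-1}$ depends on neither $x_0$ nor $a_0$. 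The only factor carrying $a_0$ is $P$, so the whole problem reduces to controlling how $q^{a_0}$ enters the coefficients of $P$ as a series in $x_0$.

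Second, I would expand $P$ by the $q$-binomial theorem. Since $(x_0/x_j)_{a_0}=\sum_{s\ge0}(-1)^sq^{\binom{s}{2}}\qbinom{a_0}{s}(x_0/x_j)^s$ and $\qbinom{a_0}{s}=(1-q^{a_0})(1-q^{a_0-1})\cdots(1-q^{a_0-s+1})/(q)_s$ is a polynomial of degree exactly $s$ in $q^{a_0}$, the coefficient of $x_0^s$ in $P$ is a Laurent polynomial in $x_1,\dots,x_n$ whose coefficients are polynomials of degree at most $s$ in $q^{a_0}$. Thus the coefficient of $x_0^s$ in $P$ has $q^{a_0}$-degree at most $s$, and $P$ involves only nonnegative powers of $x_0$.

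Third, I would locate the lowest power of $x_0$ occurring in $M$. Expanding each factor in the field of iterated Laurent series of \cite{gessxin2006} (with $x_0$ the smallest variable), the factor $(qx_j/x_0)_{a_j-1}$ has lowest $x_0$-power equal to $-(a_j-1)$ in every case: when $a_j\ge1$ this is the top $(x_j/x_0)$-term of a genuine polynomial, while when $a_j=0$ one has $(qx_j/x_0)_{-1}=1/(1-x_j/x_0)=-\sum_{m\ge1}(x_0/x_j)^m$, whose lowest power is $x_0^{1}=x_0^{-(a_j-1)}$. Multiplying over $j$, the lowest power of $x_0$ in $M$ is exactly $-\sum_j(a_j-1)=-(|a|-n)$, uniformly in the number of vanishing $a_j$.

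Finally, since $\CT_{\mathbf x}=\CT_{x_1,\dots,x_n}\circ\CT_{x_0}$ in this field and $L,\tilde D$ are free of $x_0$ and $a_0$, I would compute $\CT_{x_0}[x_0^kPM]=[x_0^{-k}]PM=\sum_{s\ge0}[x_0^s]P\cdot[x_0^{-k-s}]M$. The factor $[x_0^{-k-s}]M$ is independent of $q^{a_0}$ and vanishes unless $-k-s$ is at least the lowest power of $M$, that is, unless $s\le|a|-n-k$; and $[x_0^s]P$ contributes $q^{a_0}$-degree at most $s$. Hence $\CT_{x_0}[x_0^kPM]$ has $q^{a_0}$-degree at most $|a|-n-k$, and applying the $q^{a_0}$-independent linear operator $\CT_{x_1,\dots,x_n}(L\,\tilde D\;\cdot\;)$ preserves this bound, giving the claim (when $|a|-n-k<0$ the sum is empty and the constant term is the zero polynomial). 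The one point demanding care — and the only place where the argument is not a formal manipulation of polynomials — is the third step: when some $a_j=0$ the factor $M$ is a genuine infinite series, so one must invoke the ``$x_0$ smallest'' expansion convention to guarantee that $M$ has a well-defined lowest $x_0$-power and that this power is still $-(|a|-n)$; any other convention destroys the upper bound on $s$.
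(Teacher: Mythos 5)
Your proposal is correct, and it rests on the same two pillars as the paper's proof --- a $q$-binomial expansion in $x_0$ together with the observation that the coefficient of $x_0^s$ carries $q^{a_0}$-degree at most $s$ --- but the decomposition is genuinely different. The paper first merges each pair $\big(\frac{x_0}{x_j}\big)_{a_0}\big(\frac{x_j}{x_0}q\big)_{a_j-1}$ into the single shifted factorial $q^{\binom{a_j}{2}}\big({-\frac{x_j}{x_0}}\big)^{a_j-1}\big(\frac{x_0}{x_j}q^{1-a_j}\big)_{a_0+a_j-1}$ and expands it by \eqref{e-qbinomialn}; extracting the constant term in $x_0$ then forces the \emph{exact} constraint $k_1+\cdots+k_n=|a|-k-n$, each coefficient $\qbinom{a_0+a_j-1}{k_j}$ being a polynomial of degree $k_j$ in $q^{a_0}$. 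You instead expand only the $a_0$-bearing factor $P$ and separately bound the $x_0$-valuation of $M$, obtaining the same cap $s\le |a|-n-k$ as an inequality; your explicit handling of the $a_j=0$ case via the ``$x_0$ smallest'' expansion is exactly the point where care is needed, and it is the split-form analogue of the paper's merged identity, which holds uniformly for \emph{all} integers $a_0$ as iterated Laurent series. That uniformity is what the paper quietly buys with its version: the same polynomial produced by the proof is later evaluated at $a_0=-b$ through $Q_1(b)$ and $Q_2(b)$, so if your lemma is to feed that downstream argument you should remark that your expansion of $P$ extends to negative $a_0$ by invoking \eqref{e-qbinomialn} in place of the finite $q$-binomial theorem. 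Two minor points: your claim that the lowest $x_0$-power of $M$ is \emph{exactly} $-(|a|-n)$ requires noting that the leading coefficients $(-1)^{a_j-1}q^{\binom{a_j}{2}}x_j^{a_j-1}$ (resp.\ $-x_j^{-1}$ when $a_j=0$) are nonzero monomials and cannot cancel in the product, though only the lower bound on the valuation is actually used; and when some $a_j=0$ the coefficient $[x_0^{t}]M$ is still a Laurent \emph{polynomial} in $x_1,\dots,x_n$ (the exponents $m_j\ge 1$ from the infinite factors are bounded once $t$ is fixed), which is what makes the final application of $\CT_{x_1,\dots,x_n}$ unproblematic.
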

	\begin{proof}
		The basic idea is from \cite[Lemma 2.2]{zhou2009}.\\
		First of all, it is straightforward to show that
		\begin{align*}
		\left(\frac{x_0}{x_j} \right)_{\!\!a_0}\!\left(\frac{x_j}{x_0}q
		\right)_{\!\!a_j-1}&=q^{\binom{a_j}{2}}\left(-\frac{x_j}{x_0}\right)^{\!\!{a_j-1}}
		\!\left(\frac{x_0}{x_j} q^{-a_j+1}\right)_{\!\!a_0+a_j-1},
		\end{align*}
		for all integers $a_0$. We can regard the two sides as Laurent
		series in $x_0$. Rewrite \eqref{p1} as
		\begin{align}
		\label{e-product} \CT_{\mathbf{x}}\: x_0^kL_1(x_1,\dots
		,x_n) \prod_{j=1}^n q^{\binom{a_j}{2}}
		\left(-\frac{x_j}{x_0}\right)^{\!\!a_j-1}\! \left(\frac{x_0}{x_j}
		q^{-a_j+1}\right)_{\!\!a_0+a_j-1}\! ,
		\end{align}
		where $L_1(x_1,\dots,x_n)$ is a Laurent polynomial in
		$x_1,\dots, x_n$ and $L_1(x_1,\dots,x_n)$ is independent of $x_0$ and $a_0$.

		Using the famous $q$-binomial theorem \cite[Theorem
		2.1]{andr1976}, we obtain an identity
		\begin{align}
		\label{e-qbinomial} \frac{(bz)_\infty}{(z)_\infty} =
		\sum_{k=0}^\infty \frac{(b)_k}{(q)_k} z^k.
		\end{align}
		Using $q^{-n}$ instead of $b$ and replacing  $z$ by $uq^n$ in \eqref{e-qbinomial}, we get
		\begin{align}\label{e-qbinomialn}
		(u)_n=\frac{(u)_\infty}{(uq^n)_\infty}=
		\sum_{k=0}^\infty q^{k(k-1)/2}\qbinom{n}{k} (-u)^k
		\end{align}
		for all integers $n$, where $\qbinom{n}{k}=\frac{(q)_n}{(q)_k(q)_{n-k}}$ is the $q$-binomial coefficient.

		Applying \eqref{e-qbinomialn}, we have
		that for $1\le j\le n$, {\small
			\begin{align*} q^{\binom{a_j}{2}}
			\left(-\frac{x_j}{x_0}\right)^{\!\!a_j-1} \left(\frac{x_0}{x_j}
			q^{-a_j+1}\right)_{\!\!a_0+a_j-1}
			=\sum_{k_j\geq 0}C(k_j) \qbinom{a_0+a_j-1}{k_j}
			x_0^{k_j-a_j+1}x_j^{a_j-k_j-1},
			\end{align*}}
		where $C(k_j)=(-1)^{k_j+a_j-1}q^{\binom{a_j}2 + \binom {k_j}2
			-k_j(a_j-1)}$.
		
		Expanding the product in \eqref{e-product} and  extracting the constant
		term in $x_0$, we find that \eqref{p1} becomes
		{\small\begin{align} \label{e-midle} \sum_{\mathbf{k}}
			\qbinom{a_0+a_1-1}{k_1}\qbinom{a_0+a_2-1}{k_2}\cdots
			\qbinom{a_0+a_n-1}{k_n} \CT_{x_1,\dots,x_n} L_2(x_1,\dots
			,x_n;\mathbf{k}),
			\end{align}}
		where $L_2(x_1, \dots, x_n;\mathbf{k})$ is a Laurent
		polynomial in $x_1, \dots, x_n$ independent of $a_0$ and $x_0$ and the sum
		ranges over all sequences $\mathbf{k}=(k_1,\dots, k_n)$ of
		nonnegative integers satisfying $k_1+k_2+\cdots+k_n=|a|-k-n.$ Since
		$\qbinom{a_0+a_i-1}{k_i}$ is a polynomial in $q^{a_0}$ of degree
		$k_i$, each summand in \eqref{e-midle} is a polynomial in $q^{a_0}$
		of degree at most $k_1+k_2+\cdots +k_n=|a|-k-n$, and so is the sum.
	\end{proof}
		
	\begin{lem}\label{lem-main1}
		If $T$ is a subset of $E_0$, and $a_1,\cdots a_n$ are nonnegative intergers, then
		\begin{align}\label{main-lem}
		\prod_{(i,j)\in T} \frac{x_{j}}{x_{i}}D_n(\mathbf{x},\mathbf{a},q) =\prod_{(i,j)\in E_0\overline{T}}(-1)^{|T|}\Big(\frac{x_i}{x_j}\Big)_{a_i}\Big(\frac{qx_j}{x_i}\Big)_{a_j-1},
		\end{align}
		where $E_0:=\{(i,j)\mid 0\leq i<j\leq n \}$.
	\end{lem}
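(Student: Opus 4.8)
The plan is to prove \eqref{main-lem} one factor at a time. Since $Q\subseteq E_0$, each element of $Q$ is an ordered pair $(i,j)$ with $i<j$, and it corresponds to exactly one factor $\left(\frac{x_i}{x_j}\right)_{a_i}\left(\frac{qx_j}{x_i}\right)_{a_j-1}$ of $D_n(\mathbf{x},\mathbf{a},q)$. On the tournament side, $E_0\overline{Q}$ retains the oriented pair $(i,j)$ when $(i,j)\notin Q$ and reverses it to $(j,i)$ when $(i,j)\in Q$. So I would first reduce the statement to a single reversed edge: it is enough to show
\begin{equation}\label{e-flip}
\frac{x_j}{x_i}\left(\frac{x_i}{x_j}\right)_{a_i}\left(\frac{qx_j}{x_i}\right)_{a_j-1}=-\left(\frac{x_j}{x_i}\right)_{a_j}\left(\frac{qx_i}{x_j}\right)_{a_i-1},
\end{equation}
since every pair with $(i,j)\notin Q$ contributes the same factor to both sides. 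Multiplying \eqref{e-flip} over the $|Q|$ reversed edges then supplies the global sign $(-1)^{|Q|}$ and rewrites the left-hand side of \eqref{main-lem} as the product over the tournament $E_0\overline{Q}$.

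The key tool for \eqref{e-flip} is the telescoping relation
\begin{equation}\label{e-telescope}
(z)_k=(1-z)(zq)_{k-1},
\end{equation}
valid for every integer $k$, which follows from $(z)_k=(z)_\infty/(zq^k)_\infty$ together with $(z)_\infty=(1-z)(zq)_\infty$. Setting $u=x_i/x_j$ and applying \eqref{e-telescope} with $(z,k)=(u,a_i)$ and with $(z,k)=(1/u,a_j)$ turns the left side of \eqref{e-flip} into $\frac{1}{u}(1-u)(qu)_{a_i-1}(q/u)_{a_j-1}$ and the right side into $-(1-1/u)(q/u)_{a_j-1}(qu)_{a_i-1}$. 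Cancelling the common factor $(qu)_{a_i-1}(q/u)_{a_j-1}$ reduces \eqref{e-flip} to the elementary identity $\frac{1}{u}(1-u)=-(1-1/u)$, both sides being $u^{-1}-1$.

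Finally I would assemble the result: replacing the contribution of each $(i,j)\in Q$ by \eqref{e-flip} (one factor of $-1$ apiece) and keeping the contribution of each $(i,j)\in E_0\setminus Q$ unchanged yields exactly $(-1)^{|Q|}\prod_{(i,j)\in E_0\overline{Q}}\left(\frac{x_i}{x_j}\right)_{a_i}\left(\frac{qx_j}{x_i}\right)_{a_j-1}$. I do not expect a genuine obstacle here, as the whole lemma rests on the single two-term $q$-Pochhammer identity \eqref{e-flip} plus the bookkeeping of signs. The only point requiring care is the degenerate range $a_i=0$ or $a_j=0$, where a $q$-Pochhammer symbol of negative index occurs; there one must read, e.g., $(z)_{-1}=1/(1-z)$ from the definition $(z)_k=(z)_\infty/(zq^k)_\infty$ rather than as a finite product, after which \eqref{e-telescope} and the computation above remain valid verbatim.
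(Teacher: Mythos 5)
Your proposal is correct and takes essentially the same route as the paper: the paper's entire proof consists of your single-edge identity, namely $\frac{x_j}{x_i}\big(\frac{x_i}{x_j}\big)_{a_i}\big(\frac{qx_j}{x_i}\big)_{a_j-1}=(-1)\big(\frac{qx_i}{x_j}\big)_{a_i-1}\big(\frac{x_j}{x_i}\big)_{a_j}$, obtained by splitting off the factor $\big(1-\frac{x_i}{x_j}\big)$ and absorbing the sign, then multiplied over the $|Q|$ reversed edges exactly as you do. Your extra attention to the degenerate indices $a_i=0$ or $a_j=0$, reading $(z)_{-1}=1/(1-z)$ from $(z)_k=(z)_\infty/(zq^k)_\infty$, only makes explicit a boundary case the paper leaves implicit.
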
	
     \begin{proof}
     	 This was first proved in \cite[Proposition 2.1]{goul1985}.  For every element $(i,j)\in T$, it is clear that
     \begin{equation}
    \frac{x_j}{x_i}\Big(\frac{x_i}{x_j}\Big)_{a_i}\Big(\frac{qx_j}{x_i}\Big)_{a_j-1}=\frac{x_j}{x_i}\Big(1-\frac{x_i}{x_j}\Big)\Big(\frac{qx_i}{x_j}\Big)_{a_i-1}\Big(\frac{qx_j}{x_i}\Big)_{a_j-1}=(-1)\Big(\frac{qx_i}{x_j}\Big)_{a_i-1}\Big(\frac{x_j}{x_i}\Big)_{a_j}.
     \end{equation}
     \end{proof}
     \begin{lem}\label{zp}
     Let $0\leq b_1<b_2 <\cdots<b_s\leq a_1+a_2+\cdots+a_n$ and $b_1,b_2\cdots,b_s,a_1,a_2 \cdots a_n$ are nonnegative intergers. $F(a_1,a_2,\cdots,a_n)$ is a rational function independent of $q^{a_0}$,
     then $\mathbf{P_a(q^{a_0})}$ is a polynomial and the degree in $q^{a_0}$ is at most $|a|-s+1$,\\
     	where
     \begin{equation}\label{pa0}
     	\mathbf{P_a(q^{a_0})}=\frac{(1-q^{a_0})(q)_{a_0+a_1\cdots+a_n}}{(q)_{a_0}(q)_{a_1}\cdots(q)_{a_n}}\prod_{i=1}^{s}\frac{1}{1-q^{b_i+a_0}}F(a_1,a_2,\cdots,a_n).
     \end{equation}
     If $a_0\in \{0,-1,-2\cdots,-(a_1+a_2+a_3\cdots+a_n)\}\backslash\{-b_1,-b_2,\cdots,-b_m$\},  then $\mathbf{P_a(q^{a_0})=0}$.
     \end{lem}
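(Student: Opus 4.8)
The plan is to recognize that, apart from the factor $F(a_1,\dots,a_n)$ and the symbols $(q)_{a_1},\dots,(q)_{a_n}$ — all of which are independent of $q^{a_0}$ — the entire $a_0$-dependent part of $\mathbf{P_a}$ collapses to a product of linear factors in the single variable $X:=q^{a_0}$. Once this collapse is established, both the degree bound and the list of zeros can be read off directly from the resulting product, so there is essentially one computation to carry out and the rest is bookkeeping.

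First I would rewrite the ratio of $q$-Pochhammer symbols by means of the infinite-product definition $(z)_k=(z)_\infty/(zq^k)_\infty$, which is valid for \emph{every} integer $k$. This gives
\begin{equation*}
\frac{(q)_{a_0+|a|}}{(q)_{a_0}}=\frac{(q^{1+a_0})_\infty}{(q^{1+a_0+|a|})_\infty}=\prod_{j=1}^{|a|}\bigl(1-q^{a_0+j}\bigr),
\end{equation*}
and the point is that this identity holds for all integers $a_0$, negative ones included, precisely because I never expand the individual symbols $(q)_{a_0}$ via the piecewise formula but keep them as a ratio. Multiplying through by the remaining numerator factor $1-q^{a_0}$ absorbs the index $j=0$, so the full $a_0$-dependent numerator becomes $\prod_{j=0}^{|a|}\bigl(1-q^{a_0+j}\bigr)$.

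Next I would cancel against the denominator $\prod_{i=1}^{s}(1-q^{b_i+a_0})$. The hypothesis $0\le b_1<b_2<\cdots<b_s\le|a|$ guarantees that $\{b_1,\dots,b_s\}$ consists of $s$ distinct elements of $\{0,1,\dots,|a|\}$, so every denominator factor matches a distinct numerator factor, and after cancellation one obtains
\begin{equation*}
\mathbf{P_a}=\Bigl(\prod_{j\in\{0,1,\dots,|a|\}\setminus\{b_1,\dots,b_s\}}\bigl(1-q^{a_0+j}\bigr)\Bigr)\frac{F(a_1,\dots,a_n)}{(q)_{a_1}\cdots(q)_{a_n}}.
\end{equation*}
The bracketed product is a genuine polynomial in $X=q^{a_0}$ — no poles survive — and it has exactly $(|a|+1)-s=|a|-s+1$ linear factors, each of degree one; since the trailing fraction is free of $q^{a_0}$, this shows $\mathbf{P_a}$ is a polynomial in $q^{a_0}$ of degree at most $|a|-s+1$ (with equality whenever $F\neq0$).

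Finally, the zeros fall out of the factored form. For $a_0=-t$ with $t\in\{0,1,\dots,|a|\}\setminus\{b_1,\dots,b_s\}$, the index $j=t$ survives the cancellation, so the factor $1-q^{a_0+t}=1-q^{0}=0$ occurs in the product and forces $\mathbf{P_a}=0$; this is exactly the asserted zero set $\{0,-1,\dots,-|a|\}\setminus\{-b_1,\dots,-b_s\}$, and its cardinality $|a|-s+1$ matches the degree, confirming consistency. The only step needing genuine care is the very first one — checking that the Pochhammer ratio simplifies to a finite product of $(1-q^{a_0+j})$ uniformly in the sign of $a_0$ — but this is settled cleanly by the infinite-product definition rather than the case-split formula for $(z)_k$, after which everything is routine.
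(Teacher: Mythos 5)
Your proposal is correct and follows essentially the same route as the paper: the paper's entire proof consists of rewriting $(1-q^{a_0})(q)_{a_0+a_1+\cdots+a_n}/(q)_{a_0}$ as the finite product $(1-q^{a_0})(1-q^{a_0+1})\cdots(1-q^{a_0+|a|})$ and then reading off the vanishing points, exactly as you do. You go somewhat further than the paper by making explicit the cancellation of each $1-q^{a_0+b_i}$ against a distinct numerator factor (which is what actually establishes polynomiality and the degree bound $|a|-s+1$, a step the paper leaves implicit), but this is elaboration rather than a different method.
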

     \begin{proof}
     	we can rewrite \eqref{pa0} as
     	\begin{align*}
     	    \mathbf{P_a(q^{a_0})}
     	=&\frac{(1-q^{a_0})(q)_{a_0+a_1\cdots+a_n}}{(q)_{a_0}(q)_{a_1}\cdots(q)_{a_n}}\prod_{i=1}^{s}\frac{1}{1-q^{a_0+b_i}}F(a_1,a_2,\cdots,a_n)\\
     	=&\frac{(1-q^{a_0+|a|})(1-q^{a_0+|a|-1})\cdots(1-q^{a_0}))}{(q)_{a_1}(q)_{a_2}\cdots(q)_{a_n}}\prod_{i=1}^{s}\frac{1}{1-q^{a_0+b_i}}F(a_1,a_2,\cdots,a_n)
     	\end{align*}
     	If  $\mathbf{a_0\in\{0,-1,-2\cdots,-(a_1+a_2+a_3\cdots+a_n)\}\backslash\{-b_1,-b_2,\cdots ,-b_s\}}$, then  $\mathbf{P_a(q^{a_0})=0}$.
     \end{proof}
	\section{Tournament and Laurent series}
		
	We let $K=\CC(q)$, and assume that 
	the field of iterated Laurent series $K\langle\!\langle x_n,
	x_{n-1},\ldots,x_0\rangle\!\rangle
	 =K(\!(x_n)\!)(\!(x_{n-1})\!)\cdots (\!(x_0)\!)$ includes all series. First we have a Laurent series in $x_0$ , then we have a Laurent series in $x_1$, and so on. 
	The work \cite{gessxin2006}  explained the motivation for choosing
	$K\langle\!\langle x_n, x_{n-1},\ldots,x_0\rangle\!\rangle$ as a
	working field. References \cite{xin2005} and \cite{xin2004} provide more
	detailed accounts of the properties of this field and its
	applications.
	
	The field of rational functions is a subfield
	of $K\langle\!\langle x_n, x_{n-1},\ldots,x_0\rangle\!\rangle$. Therefore, through a unique iterative Laurent series expansion, each rational function can be identified. If $i<j$ then
	$$\frac{1}{1-q^k x_i/x_j}=\sum_{l= 0}^\infty q^{kl} x_i^l x_j^{-l}.$$
	However, this expansion is not valid for $i>j $ and instead we
	use the expansion
	{\small$$ \frac{1}{1-q^k x_i/x_j}=\frac1{-q^k x_i/x_j(1-q^{-k}x_j/x_i)}
		=\sum_{l=0}^\infty -q^{-k(l+1)} x_i^{-l-1}x_j^{l+1}.$$}
	
	We shall use $\CT_{x_i} F(\mathbf{x})$ to denote the constant term of the series $F(\mathbf{x})$.
	It follows that
	\begin{equation}
	\label{e-ct} \CT_{x_i} \frac{1}{1-q^k x_i/x_j} =
	\begin{cases}
	1, & \text{ if }i<j, \\
	0, & \text{ if }i>j. \\
	\end{cases}
	\end{equation}
	The monomial $M=q^k x_i/x_j$ is called \emph{small} if $i<j$ and
	\emph{large} if $i>j$.  Therefore the constant term  of $1/(1-M)$ in $x_i$
	is $1$ if $M$ is small and $0$ if $M$ is large.
	
	 The constant term operators defined in this
	way have the commutativity property:
	$$\CT_{x_i} \CT _{x_j} F(\mathbf{x}) = \CT_{x_j} \CT_{x_i} F(\mathbf{x}).$$
	Commutativity implies that the constant term in a set of variables
	is well-defined, and this property will play a very important role in the proof of two main theorems. (Note that, by contrast, the constant term
	operators in \cite{zeil1999} do not commute.)

	The \emph{degree} of a rational function of $x$ is the degree in $x$
	of the numerator minus the degree  in $x$ of the denominator. A \emph{proper} (resp. \emph{almost proper}) rational function in $x$ has the property that its degree in
	$x$ is negative (resp. zero).
	
	Let
	\begin{align}\label{e-defF}
	F=\frac{p(x_k)}{x_k ^d \prod_{i=1}^m (1-x_k/\alpha_i)}
	\end{align}
	be a rational function of $x_k$, where $p(x_k)$ is a polynomial in
	$x_k$, and the $\alpha_i$ are distinct monomials, each of the form
	$x_t q^s$. Then the partial fraction decomposition of $F$ with
	respect to $x_k$ has the following form:
	{\small\begin{align}\label{e-defFs}
		F=p_0(x_k)+\frac{p_1(x_k)}{x_k^d}+\sum_{j=1}^m \frac{1}{1-
			x_k/\alpha_j}  \left. \left(\frac{p(x_k)}{x_k^d \prod_{i=1,i\ne j}^m
			(1-x_k/\alpha_i)}\right)\right|_{x_k=\alpha_j},
		\end{align}}
	where $p_0(x_k)$
	is a polynomial in $x_k$, and $p_1(x_k)$ is a polynomial in $x_k$ of
	degree less than $d$.
	
	We will use the following lemma as a
	basic tool for extracting constant terms. It is proven in \cite{zhou2009}.
	\begin{lem}\label{lem-almostprop}
		Let $F$ be as in \eqref{e-defF} and \eqref{e-defFs}.
		Then
		\begin{align}\label{e-almostprop}
		\CT_{x_k} F=p_0(0) +\sum_j  \bigl(F\,
		(1-x_k/\alpha_j)\bigr)\Bigr|_{x_k =\alpha_j},
		\end{align}
		where 
		the sum ranges over all $j$ such that $x_k/\alpha_j$ is small. In addition, if $F$ is proper in $x_k$, then $p_0(x_k)=0$; if $F$ is
		almost proper in $x_{k}$, then $x_k/\alpha_j$ must be large for all $j$ and therefore
		$p_0(x_k)=(-1)^m\prod_{i=1}^m\alpha_{i}\LC_{x_{k}}p(x_k)$, where
		$\LC_{x_k}$ means to take the leading coefficient with respect to
		$x_k$.
	\end{lem}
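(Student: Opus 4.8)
The plan is to apply $\CT_{x_k}$ termwise to the partial fraction decomposition \eqref{e-defFs} and to evaluate each resulting piece by means of the expansion rule \eqref{e-ct}. Since $\CT_{x_k}$ is linear over the coefficient ring generated by $q$ and the variables other than $x_k$, it is enough to treat the three kinds of summand in \eqref{e-defFs} separately: the polynomial part $p_0(x_k)$, the principal part $p_1(x_k)/x_k^d$ at the origin, and the simple-pole terms attached to the $\alpha_j$.

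I would first dispatch the two easy pieces. Because $p_0(x_k)$ is a polynomial in $x_k$, the only contribution to the coefficient of $x_k^0$ is its value at $0$, so $\CT_{x_k}p_0(x_k)=p_0(0)$. For $p_1(x_k)/x_k^d$ I would invoke the stated fact $\deg p_1<d$: writing $p_1(x_k)=\sum_{t=0}^{d-1}c_tx_k^t$ gives $p_1(x_k)/x_k^d=\sum_{t=0}^{d-1}c_tx_k^{t-d}$, in which every exponent $t-d$ is strictly negative, so this piece contributes $0$ to the constant term.

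The heart of the argument is the simple-pole terms. Each $\alpha_j$ is a monomial $x_tq^s$ with $t\ne k$, so the residue-type coefficient $c_j:=\bigl(p(x_k)/(x_k^d\prod_{i\ne j}(1-x_k/\alpha_i))\bigr)\big|_{x_k=\alpha_j}$ does not involve $x_k$ and may be pulled out of $\CT_{x_k}$. The monomial $x_k/\alpha_j=q^{-s}x_k/x_t$ is small precisely when $k<t$ and large when $k>t$, so \eqref{e-ct} gives $\CT_{x_k}\bigl(c_j/(1-x_k/\alpha_j)\bigr)=c_j$ when $x_k/\alpha_j$ is small and $0$ otherwise. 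Observing that $c_j=\bigl(F\,(1-x_k/\alpha_j)\bigr)\big|_{x_k=\alpha_j}$ and adding the three contributions produces \eqref{e-almostprop}, with the sum restricted exactly to those $j$ for which $x_k/\alpha_j$ is small.

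It remains to identify $p_0(x_k)$ by a degree count. The denominator $x_k^d\prod_{i=1}^m(1-x_k/\alpha_i)$ is a polynomial in $x_k$ of degree $d+m$ whose leading coefficient is $(-1)^m/\prod_{i=1}^m\alpha_i$. If $F$ is proper in $x_k$, the numerator has degree below $d+m$, polynomial division yields no quotient, and $p_0(x_k)=0$. If $F$ is almost proper, the numerator has degree exactly $d+m$, so $p_0$ is the constant quotient of leading coefficients, namely $\LC_{x_k}p(x_k)$ divided by $(-1)^m/\prod_{i=1}^m\alpha_i$, which is $(-1)^m\prod_{i=1}^m\alpha_i\,\LC_{x_k}p(x_k)$ since $(-1)^{-m}=(-1)^m$ (and then $p_0(0)=p_0$ because $p_0$ is constant). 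I expect the only genuinely delicate point to be this last sign-and-product bookkeeping together with the correct small/large restriction of the sum; the remainder follows directly from linearity and \eqref{e-ct}.
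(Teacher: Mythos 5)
Your proposal is correct and follows essentially the same route as the paper's source for this lemma: the paper gives no proof of its own but defers to \cite{zhou2009}, where the statement is established by exactly this argument --- apply $\CT_{x_k}$ termwise to the partial-fraction decomposition \eqref{e-defFs}, use \eqref{e-ct} to keep only the poles $\alpha_j$ with $x_k/\alpha_j$ small, and identify $p_0$ by comparing degrees and leading coefficients of $p(x_k)$ against the denominator $x_k^d\prod_{i=1}^m(1-x_k/\alpha_i)$. Your sign-and-product bookkeeping for the almost proper case, $p_0=(-1)^m\prod_{i=1}^m\alpha_i\,\LC_{x_k}p(x_k)$, is also the same computation as in that reference, so there is nothing to add.
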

	
	The following slight {extension}  \cite[Lemma 4.2]{gessxin2006}
	plays an important role in our argument.
	\begin{lem}\label{lem-import1}
		Let $a_{1},\ldots,a_{s}$ be nonnegative integers. Then for arbitrary positive integers $k_{1},\ldots,k_{s}$ with $1\leq k_{i}\leq
		a_{1}+\cdots+a_{s}-1$ for all $i$, either $1\leq k_{i}\leq a_{i}-1$
		for some $i$ or $1-a_{j}\leq k_{i}-k_{j}\leq a_{i}-1$ for some $i<j$.
	\end{lem}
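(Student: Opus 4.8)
The plan is to prove this combinatorial lemma by a direct argument on the sequence $k_1,\ldots,k_s$, showing that if \emph{both} conclusions fail then we reach a contradiction. So suppose, toward a contradiction, that $k_i\geq a_i$ for every $i$ (so the first alternative fails), and that for every pair $i<j$ we have either $k_i-k_j\leq -a_j$ or $k_i-k_j\geq a_i$ (so the second alternative fails at every pair). The first negation gives the uniform lower bounds $k_i\geq a_i$; combined with $k_i\leq a_1+\cdots+a_s-1$, this confines each $k_i$ to the window $[a_i,\;|a|-1]$. The heart of the matter is to exploit the pairwise gap condition to derive a contradiction with this upper bound.

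First I would reformulate the pairwise negation more symmetrically. For a fixed pair $i<j$, the failure of $1-a_j\leq k_i-k_j\leq a_i-1$ means $k_i-k_j$ lands strictly outside that interval, i.e. $k_i-k_j\geq a_i$ or $k_i-k_j\leq -a_j$. I would then sort the indices by the value of $k_i$ (or by $k_i$ adjusted by a partial sum of the $a$'s) and track how the gaps accumulate along a chain. The natural quantity to follow is a telescoping sum: if one can produce a chain of indices along which each consecutive gap is forced to be at least the relevant $a$-value, summing these gaps should push some $k_i$ above $|a|-1$, contradicting the upper bound. The key structural observation is that the two escape directions ($\geq a_i$ versus $\leq -a_j$) are precisely calibrated so that no arrangement of the $k_i$'s can simultaneously satisfy all pairwise constraints while respecting $k_i\geq a_i$ and $k_i\leq |a|-1$.

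Concretely, I would define, for each index $i$, the shifted value $\ell_i := k_i$ and consider the permutation $\tau$ that orders the indices so that $k_{\tau(1)}\leq k_{\tau(2)}\leq\cdots\leq k_{\tau(s)}$. For any two consecutive indices in this order, the pairwise gap condition (in whichever direction applies for the pair) forces a gap of size at least $a_{\tau(m)}$ or $a_{\tau(m+1)}$ between $k_{\tau(m)}$ and $k_{\tau(m+1)}$. Chaining these consecutive gaps and using $k_{\tau(1)}\geq a_{\tau(1)}\geq 1$, I expect to obtain $k_{\tau(s)}\geq a_{\tau(1)}+a_{\tau(2)}+\cdots+a_{\tau(s)}=|a|$, which contradicts $k_{\tau(s)}\leq |a|-1$. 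The bookkeeping must be done carefully because the escape direction can differ from pair to pair, so the argument is really that \emph{whichever} direction is taken for each consecutive pair still contributes at least the corresponding summand to the total spread.

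The main obstacle I anticipate is handling the two-sided nature of the gap condition cleanly: for a pair $i<j$ one inequality is stated with $a_i$ and the other with $a_j$, and the sign depends on the original index order $i<j$ rather than on the sorted order. The crux is to verify that along the sorted chain each consecutive jump genuinely picks up the full $a$-value of one of its two endpoints, so that the telescoped sum recovers the \emph{entire} total $|a|=a_1+\cdots+a_s$ with no double-counting and no deficit. I would manage this by arguing one consecutive pair at a time and confirming that, in either escape case, the contribution to $k_{\tau(s)}-k_{\tau(1)}$ is bounded below by a distinct summand, so that summing over the $s-1$ consecutive pairs yields at least $a_{\tau(2)}+\cdots+a_{\tau(s)}$; adding the base bound $k_{\tau(1)}\geq a_{\tau(1)}$ then closes the argument against the upper bound $|a|-1$.
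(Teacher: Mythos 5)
Your proposal is correct and takes essentially the same route as the paper: the paper also negates both alternatives, sorts the values as $k_{i_1}\leq k_{i_2}\leq\cdots\leq k_{i_s}$, checks the two cases $i_m<i_{m+1}$ and $i_m>i_{m+1}$ to obtain the consecutive-gap bound $k_{i_{m+1}}\geq k_{i_m}+a_{i_{m+1}}$, and telescopes together with $k_{i_1}\geq a_{i_1}$ to force $k_{i_s}\geq a_1+\cdots+a_s$, contradicting $k_{i_s}\leq a_1+\cdots+a_s-1$. The one bookkeeping point you flag as a worry (whether each jump contributes $a_{\tau(m)}$ or $a_{\tau(m+1)}$) resolves exactly as in the paper: in either escape direction the sorted gap is at least $a_{\tau(m+1)}$, the $a$-value of the upper endpoint, so the summands $a_{\tau(2)},\dots,a_{\tau(s)}$ are automatically distinct and no double-counting can occur.
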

	\begin{proof}
		The basic idea is from \cite[Lemma 4.2]{gessxin2006}. But we can simplify it.
		Assume $k_1,\dots ,k_s$ satisfy that for all $i$, $a_i \le
		k_i\le a_1+\cdots +a_s-1$, and for all $i<j,$ either $k_i-k_j\ge
		a_i$ or $k_i-k_j\le -a_j$. Then we need to show that there exists an $i$ such that
		$k_{i}\geq a_{1}+\cdots+a_{s}$ .
        Let $k_1,k_2\cdots k_n$ be arranged from smallest to largest:
        $k_{i_1}\leq k_{i_2}\leq k_{i_3}\cdots \leq k_{i_s}$.
        It's easy to obtain $k_{i_m}+a_{i_{m+1}}\leq k_{i_{m+1}}$
        by checking two cases ($i_m>i_{m+1}$ and $i_m<i_{m+1}$). And $k_{i_1}\geq a_{i_1}$.
        This means
             \begin{align}
             	 k_{i_s}&\geq  k_{i_{1}}+a_{i_{2}}+\cdots+a_{i_{s}}\nonumber\\
             	&\geq a_{i_{1}}+a_{i_{2}}+a_{i_{3}}+\cdots+a_{i_{s}}\nonumber\\
             	&=a_{1}+a_{2}+\cdots+a_{s}. \nonumber
             \end{align}
             
        By assumption, $k_{i_s}\leq a_1+a_2+\cdots+a_s-1$, but $k_{i_s}\geq a_1+a_2+\cdots+a_s$.
        This completes the proof.
	\end{proof}

    \begin{lem}\label{non}
  Let $T$ be a tournament on $n$ vertices and  $R_T$ be the collection of nonempty subsets $R$ of the vertices of $T$ such that if $r \in R$ and $m \notin R$, then $(r,m) \in T$. If $T$ is non-transitive, then $|R_T| \leq n-2$.
    
    \end{lem}
    \begin{proof}
   We first note that if $R_1, R_2 \in R_T$ and $|R_1| \geq |R_2|$, then $R_1 \supseteq R_2$. This follows because if $r_2 \in R_2$, $r_2 \notin R_1$, then there is at least one $r_1 \in R_1$ that is not in $R_2$. Therefore we get the contradiction that $(r_2,r_1) \in T$ and $(r_1,r_2) \in T$. It follows that for each cardinality from 1 through $n$, there is at most one element of $R_T$ with that cardinality.
   
   Since $T$ is nontransitive, it contains a 3-cycle, $i \rightarrow j \rightarrow k \rightarrow i$. If $R \in R_T$ contains any one of these vertices, then it must contain all three. If not, then we can assume $i \in R$, $j \notin R$. If $k \in R$, then $k \rightarrow j$, while if $k \notin R$, then $i \rightarrow k$. Therefore, if $R_1$ contains $i$ while $R_2$ does not, then $|R_1| \geq |R_2| + 3$.
    \end{proof}
	
	\begin{lem}\label{sigma}
	Let T be a tournament and $\sigma$ is a permutation,
	then
	\begin{equation}
	\CT_{\mathbf{x}}\frac{x_m}{x_k}\prod_{(i,j)\in T}\Big(\frac{x_i}{x_j}\Big)_{a_i}\Big(\frac{qx_j}{x_i}\Big)_{a_j-1}=\CT_{\mathbf{x}}\frac{x_{\sigma(m)}}{x_{\sigma(k)}}\prod_{(i,j)\in T}\Big(\frac{x_{\sigma(i)}}{x_{\sigma(j)}}\Big)_{a_i}\Big(\frac{qx_{\sigma(j)}}{x_{\sigma(i)}}\Big)_{a_j-1}.
	\end{equation}
    \end{lem}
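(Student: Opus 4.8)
The plan is to read the right-hand side as the result of applying the substitution $x_i\mapsto x_{\sigma(i)}$ to the entire left-hand integrand, so that the lemma becomes the assertion that $\CT_{\mathbf x}$ is unchanged when the variables are relabelled by $\sigma$. First I would make a group-theoretic reduction. The family of integrands $\frac{x_m}{x_k}\prod_{(i,j)\in T}\big(\frac{x_i}{x_j}\big)_{b_i}\big(\frac{qx_j}{x_i}\big)_{b_j-1}$, ranging over all tournaments $T$, all indices $m,k$ and all weight vectors $\mathbf b$, is closed under such substitutions: relabelling by $\tau$ yields the same type of integrand for the tournament $\tau T$, the prefactor $x_{\tau(m)}/x_{\tau(k)}$ and the permuted weights $b_{\tau^{-1}(i)}$. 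Since every permutation is a product of adjacent transpositions, and the single-transposition statement composes (applied successively to the intermediate integrands, which again lie in the family), it suffices to treat $\sigma=(p,p+1)$, an adjacent transposition in the nesting order $x_n,\dots,x_0$.

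Second, I would localize the effect of the swap. In the product over $T$, only the factor attached to the edge joining $p$ and $p+1$ is a series in $x_p/x_{p+1}$; every other factor involves at most one of $x_p,x_{p+1}$ together with a third variable, and its iterated-Laurent-series expansion is insensitive to the relative nesting of $x_p$ and $x_{p+1}$. Hence applying $\sigma=(p,p+1)$ and taking the constant term in the standard field agrees with taking the constant term of the original integrand in the field where the nesting order of $x_p$ and $x_{p+1}$ has been interchanged. The lemma thus reduces to the claim that interchanging two adjacent variables in the nesting order leaves the constant term of these integrands unchanged.

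Third, I would compare the two expansions of the $\{p,p+1\}$-factor directly. The only possible discrepancy is a factor $1/(1-x_{p+1}/x_p)$, which arises only when the head weight $a_{p+1}$ is zero, and whose two expansions differ by the formal delta series $\sum_{l\in\mathbb Z}(x_{p+1}/x_p)^{l}$. This delta series is multiplied by the companion factor $\big(\frac{x_p}{x_{p+1}}\big)_{a_p}$, whose specialization at $x_p=x_{p+1}$ equals $\prod_{l=0}^{a_p-1}(1-q^l)$ and hence vanishes as soon as $a_p\ge1$. Thus a genuine discrepancy can occur only when $a_p=a_{p+1}=0$, and in that case extracting the constant term collapses, via the standard delta identity, to a constant term of the surrounding product specialized along $x_{p+1}=x_p$.

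The cleanest situation, and the conceptual core, is when all the weights $a_1,\dots,a_n$ are positive: the integrand is then a Laurent polynomial and its constant term is manifestly invariant under any permutation of the variables, since the substitution permutes monomials while fixing the all-zero monomial. By the reduction above the only remaining case is when two swapped adjacent weights both vanish, and the main obstacle is to show that the resulting $x_{p+1}=x_p$ residual contributes nothing to $\CT_{\mathbf x}$. I would attack this either through an antisymmetry of the specialized product or, more robustly, by using that $\CT_{\mathbf x}$ is a polynomial in the variables $q^{a_i}$ (in the spirit of Lemma~\ref{pd}) and is therefore pinned down by its values at positive weights, where the Laurent-polynomial argument already applies. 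Reconciling the honest iterated-series constant term with this polynomial continuation at the vanishing-weight boundary is the step I expect to require the most care.
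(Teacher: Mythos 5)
Your route is genuinely different from the paper's. The paper disposes of Lemma~\ref{sigma} in one line: replace $x_i$ by $x_{\sigma(i)}$ and declare the constant term unchanged. That is legitimate exactly when the integrand is a Laurent polynomial, so that $\CT_{\mathbf x}$ is order-independent; the paper never addresses the fact that in $K\langle\!\langle x_n,\dots,x_0\rangle\!\rangle$ a factor $\big(\frac{qx_j}{x_i}\big)_{a_j-1}$ with $a_j=0$ is the genuine denominator $1/(1-x_j/x_i)$, whose expansion depends on the nesting order. Your reduction to adjacent transpositions, the localization to the single edge factor between $p$ and $p+1$, and the delta-series comparison pin down precisely when the relabelling argument is valid, and your observation that the companion factor kills the delta when the tail weight is positive is equivalent to the identity $\big(\frac{x_i}{x_j}\big)_{a_i}\big(\frac{qx_j}{x_i}\big)_{-1}=-\frac{x_i}{x_j}\big(\frac{qx_i}{x_j}\big)_{a_i-1}$ (the computation behind Lemma~\ref{lem-main1}), which turns the whole integrand into a Laurent polynomial whenever no edge of $T$ joins two zero weights. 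Up to that point your argument is correct and strictly more informative than the paper's.

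The gap is the deferred case $a_p=a_{p+1}=0$, and it cannot be closed, because the lemma as stated is false there. Take $n=2$, $T=\{(1,2)\}$, $a_1=a_2=0$, $m=1$, $k=2$, and $\sigma$ the transposition. The left-hand side is
\begin{equation*}
\CT_{\mathbf x}\,\frac{x_1}{x_2}\cdot\frac{1}{1-x_2/x_1}
=\CT_{\mathbf x}\Big(-\sum_{l\ge 2}\big(x_1/x_2\big)^{l}\Big)=0,
\end{equation*}
since $x_2/x_1$ is large, while the right-hand side is
\begin{equation*}
\CT_{\mathbf x}\,\frac{x_2}{x_1}\cdot\frac{1}{1-x_1/x_2}
=\CT_{\mathbf x}\sum_{l\ge 0}\big(x_1/x_2\big)^{l-1}=1,
\end{equation*}
since $x_1/x_2$ is small. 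This defeats both of your proposed rescues at once: for positive weights the two sides agree (your Laurent-polynomial argument), so any continuation from positive weights would assign them a common value, yet their honest iterated-series constant terms at the corner differ --- hence at least one side fails to match its continuation, which is exactly the reconciliation step you flagged. (Lemma~\ref{pd} gives polynomiality only in $q^{a_0}$, not in $q^{a_i}$ for $i\ge1$, so it offers no help here, and there is no antisymmetry to exploit either.) The correct repair is to add the hypothesis, implicit in the paper and satisfied in every application of the lemma, that at most one $a_i$ vanishes, so that no edge of $T$ has both endpoint weights zero; under that hypothesis your delta analysis --- or simply the Laurent-polynomial reduction above --- already completes the proof, and the continuation step becomes unnecessary.
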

	\begin{proof}
	We replace $x_i$ with $x_{\sigma(i)}$. It is straightforward to check that the identity is true.
	\end{proof}	

     \section{ Nontransitive Condition} 
     	Define $Q_1(b)$ and $Q_2(b)$ to be
	\begin{align}
	Q_1(b):=\frac{x_0}{x_1}\prod_{(i,j)\in T_1}\frac{x_j}{x_i} \prod_{j=1}^{n}
	\left(\frac{x_0}{x_j}\right)_{\!\!\!-b}\left(\frac{x_j}{x_0}q\right)_{\!\!\!a_j-1}
	\prod_{1\leq i<j\leq n}
	\left(\frac{x_i}{x_j}\right)_{\!\!\!a_i}\left(\frac{x_j}{x_i}q\right)_{\!\!\!a_j-1},
	\end{align}
	\begin{align}
	Q_2(b):=\frac{x_0}{x_2}\prod_{(i,j)\in T_2}\frac{x_j}{x_i}  \prod_{j=1}^{n}
	\left(\frac{x_0}{x_j}\right)_{\!\!\!-b}\left(\frac{x_j}{x_0}q\right)_{\!\!\!a_j-1}
	\prod_{1\leq i<j\leq n}
	\left(\frac{x_i}{x_j}\right)_{\!\!\!a_i}\left(\frac{x_j}{x_i}q\right)_{\!\!\!a_j-1}.
	\end{align}
	If $b\ge 0$, then
	\begin{align}
	Q_1(b)=\frac{x_0}{x_1}\prod_{(i,j)\in T_1}\frac{x_j}{x_i} \prod_{j=1}^{n} \frac{(x_jq/x_0)_{a_j-1}}
	{\big(1-\frac{x_0}{x_jq}\big)\big(1-\frac{x_0}{x_jq^2}\big)\cdots
		\big(1-\frac{x_0}{x_jq^b}\big)}
	\prod_{1\leq i<j\leq n} \left(\frac{x_i}{x_j}\right)_{\!\!\!a_i}\left(\frac{x_j}{x_i}q\right)_{\!\!\!a_j-1},
	\end{align}
	\begin{align}
	Q_2(b)=\frac{x_0}{x_2}\prod_{(i,j)\in T_2}\frac{x_j}{x_i} \prod_{j=1}^{n} \frac{(x_jq/x_0)_{a_j-1}}
	{\big(1-\frac{x_0}{x_jq}\big)\big(1-\frac{x_0}{x_jq^2}\big)\cdots
		\big(1-\frac{x_0}{x_jq^b}\big)}
	\prod_{1\leq i<j\leq n} \left(\frac{x_i}{x_j}\right)_{\!\!\!a_i}\left(\frac{x_j}{x_i}q\right)_{\!\!\!a_j-1}.
	\end{align}
	
	Note that the degree in $x_0$ of $1-x_jq^i/x_0$ is zero, the degree in
$x_0$ of $Q_1(b)$ and $Q_2(b)$ is  $1-nb$. Thus $Q_1(b)$ and $Q_2(b)$ are proper when $b>0$ and
$n\ge 2$. Lemma \ref{lem-almostprop} gives
\begin{align}\label{qh2}
\CT_{x_0}Q_1(b)=\sum_{\substack{0<r_1\leq n,\\ 1\leq k_1\leq
b}}Q_1(b\mid r_1;k_1)
\end{align}
\begin{align}\label{qh3}
\CT_{x_0}Q_2(b)=\sum_{\substack{0<r_1\leq n,\\ 1\leq k_1\leq
b}}Q_2(b\mid r_1;k_1),
\end{align}
where
$$
Q_1(b\mid r_1;k_1)=Q_1(b)\left(1-\frac{x_0}{x_{r_1}q^{k_1}}\right)\bigg|_{x_0=x_{r_1}q^{k_1}}$$
$$
Q_2(b\mid r_1;k_1)=Q_2(b)\left(1-\frac{x_0}{x_{r_1}q^{k_1}}\right)\bigg|_{x_0=x_{r_1}q^{k_1}}.
$$
For each term in \eqref{qh2} and \eqref{qh3}, we will extract the constant term in $x_{r_1}$, then perform further constant term extractions, evaluating one variable at each step. We introduce some notation from \cite{gessxin2006} to keep track of the terms we select.
	
Let $F$ be a rational function of $x_0, x_1, \ldots , x_n$. Given a subset $R\subseteq \{1,2,\cdots,n\}$, let $\mathbf{r}$
be the sequence of the elements of $R$ in increasing order, $\mathbf{r} = (r_1, r_2, \ldots , r_s)$,
and let k be a sequence of positive integer constants, $\mathbf{k} = (k_1, k_2, \ldots, k_s)$. Let
$E_{\mathbf{r,k}} F$ be the result of replacing $x_{r_i}$ with $x_{r_s}q^{k_s-k_i}$ in $F$ for $i = 0, 1, \ldots, s -1,$ where we set $r_0= k_0 = 0$. Then for
$0 < r_1 < r_2 < \cdots < r_s \leq n$ and $0 < k_i \leq b$, we
define
	\begin{align}\label{q1rk}
	Q_1(b\mid\mathbf{r;k})=Q_1(b\mid r_1,\ldots,r_s;k_1,\ldots,k_s)=
	E_{\mathbf{r,k}}\left[Q_1(b)\prod_{i=1}^{s}\Big(1-\frac{x_0}{x_{r_i}q^{k_i}}\Big)\right],
	\end{align}
	\begin{align}\label{q2rk}
	Q_2(b\mid\mathbf{r;k})=Q_2(b\mid r_1,\ldots,r_s;k_1,\ldots,k_s)=
	E_{\mathbf{r,k}}\left[Q_2(b)\prod_{i=1}^{s}\Big(1-\frac{x_0}{x_{r_i}q^{k_i}}\Big)\right].
	\end{align}
	Note that the product on the right-hand side of \eqref{q1rk} and \eqref{q2rk} cancels all the factors in the
	denominator of $Q_1$ and $Q_2$ that would be taken to zero by $\Erk$.
	\begin{lem}\label{lem-import2}
    
    Given a tournament $E\overline{T}$ where $T$ is a subset of $E$, then $\Erk\frac{A}{B}$ has degree $0$ in $x_{r_s}$
    if and only if for any $r\in R,m\notin R,(r,m)\in E\overline{T}$. Otherwise $\Erk\frac{A}{B}$ is of   negative degree in $x_{r_s}$,
    where 
    $$A:=\prod_{(i,j)\in T}\frac{x_j}{x_i},\quad  B:=\prod_{1\leq i<j\leq n}1-\frac{q^{a_j}x_j}{x_i}.$$ 
	\end{lem}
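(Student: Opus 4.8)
The plan is to compute the degree in $x_{r_s}$ of $\Erk\frac{A}{B}$ explicitly and to show it equals the negative of the number of edges of $E\overline{Q}$ that cross the cut $(R,\overline{R})$ pointing \emph{into} $R$, where $R=\{r_1,\dots,r_s\}$; since that number is nonnegative, the degree is always at most $0$, and it is $0$ exactly when no edge enters $R$ from outside, i.e.\ when every $r\in R$ beats every $m\notin R$. First I would observe that neither $A$ nor $B$ contains $x_0$, so on $\frac{A}{B}$ the operator $\Erk$ amounts only to the substitutions $x_{r_i}\mapsto x_{r_s}q^{\,k_s-k_i}$ for $1\le i\le s$. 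Thus each variable indexed by $R$ is sent to a monomial in $x_{r_s}$, while each $x_m$ with $m\notin R$ is left alone, and the degree of the image in $x_{r_s}$ can be read off factor by factor.

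Next I would tabulate the per-factor degrees. A factor $x_j/x_i$ of $A$ (so $(i,j)\in Q$) acquires $x_{r_s}$-degree $+1$ when $i\notin R,\,j\in R$, degree $-1$ when $i\in R,\,j\notin R$, and degree $0$ otherwise. A factor $1-q^{a_j}x_j/x_i$ of $B$ (with $i<j$) acquires, in the sense of numerator-degree minus denominator-degree, $x_{r_s}$-degree $1$ precisely when $i\notin R,\,j\in R$, and degree $0$ in all other cases; the only delicate case is $i\in R,\,j\notin R$, where the factor becomes $1-q^{a_j}x_j/(c\,x_{r_s})$ for some power $c$ of $q$, and both numerator and denominator have $x_{r_s}$-degree $1$, so it contributes $0$. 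Pairs with both indices in $R$ collapse to constants in $x_{r_s}$ and contribute $0$; here I would invoke the non-vanishing hypothesis recorded after \eqref{q2rk} to guarantee none of these constants is zero.

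I would then add up $\deg_{x_{r_s}}\Erk\frac{A}{B}=\deg_{x_{r_s}}\Erk A-\deg_{x_{r_s}}\Erk B$ over all pairs, using the orientation dictionary: for $i<j$, $(i,j)\in Q$ means $j\to i$ in $E\overline{Q}$ while $(i,j)\notin Q$ means $i\to j$. Running through the two crossing shapes and their two $Q$-subcases shows that a crossing pair contributes $-1$ exactly when its $E\overline{Q}$-edge points from $\overline{R}$ into $R$ and $0$ when it points from $R$ into $\overline{R}$, while non-crossing pairs contribute $0$. This gives the clean formula $\deg_{x_{r_s}}\Erk\frac{A}{B}=-\#\{\text{edges of }E\overline{Q}\text{ from }\overline{R}\text{ to }R\}$, and both assertions of the lemma drop out: the degree is $0$ iff no edge enters $R$, that is iff $(r,m)\in E\overline{Q}$ for all $r\in R,\ m\notin R$, and it is strictly negative otherwise.

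The main obstacle is the alignment in the last two steps: the $A$-contribution only sees the pairs in $Q$ whereas the $B$-contribution sees every pair, so one must balance these against the orientation convention of $E\overline{Q}$, and in particular handle the asymmetry of the $B$-factors, which contribute to the degree only for the single orientation $i\notin R,\,j\in R$. Once the per-pair contribution is shown to depend solely on whether the corresponding edge enters or leaves $R$, the stated equivalence is immediate; it is also worth noting that this identifies the degree-zero sets $R$ precisely with the dominant sets counted in Lemma~\ref{non}.
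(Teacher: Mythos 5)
Your proof is correct and follows essentially the same route as the paper: both arguments tabulate the per-factor $x_{r_s}$-degrees of $\Erk A$ and $\Erk B$ under the substitution $x_{r_i}\mapsto x_{r_s}q^{k_s-k_i}$ and then compare them via the orientation convention of $E\overline{Q}$. The only difference is bookkeeping — the paper computes $\deg_{x_{r_s}}\Erk B=(r_1+\cdots+r_s)-\binom{s+1}{2}$ exactly and bounds $\deg_{x_{r_s}}\Erk A$ above by the same quantity with equality precisely in the dominating case, whereas your per-pair cancellation gives the sharper exact formula $\deg_{x_{r_s}}\Erk(A/B)=-\#\{\text{edges of }E\overline{Q}\text{ entering }R\}$, from which both assertions of the lemma follow at once.
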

	\begin{proof}
		First we note that $\Erk(1-\frac{q^{a_j}x_j}{x_i})$ is of degree $1$ in
		$x_{r_s}$  if $j \in R$ and $i\notin R$, otherwise the degree is $0$.
		Thus the term in $\Erk B$ that can contribute to the degree of $x_{r_s}$ can be written as $$\prod_{k=1}^{s}\prod_{\mbox{$\tiny \begin{array}{c}
					1\leq i\leq r_k\\i\notin R \end{array} $}}\Erk\left[(1-\frac{q^{a_{r_k}}x_{r_k}}{x_i})\right].$$
		The total degree of $x_{r_s}$ in $$\prod_{\tiny	\begin{array}{c}			
				1\leq i<r_{k}\\
				i\notin R \end{array}}
		\Erk \left[(1-\frac{q^{a_{r_k}}x_{r_k}}{x_i})\right]$$
		is $r_k-k$. Therefore the degree of $x_{r_s}$ in $\Erk B$ is
		\begin{equation}\label{deg4}
		    (r_1+r_2\ldots+r_s)-\frac{(s+1)s}{2}.
		\end{equation}
		Next, we consider the degree of $x_{r_s}$ in $A$. When $(i,j)\in T$, the degree of $x_{r_s}$ in $\Erk\frac{x_j}{x_i}$ is 1 when $j\in R,i\notin R$, $-1$ when $i\in R$ and $j\notin R$, and $0$ otherwise. Therefore, the part of $A$ that can contribute to the degree of $x_{r_s}$ is
		$$\prod_{i=1}^{s}\prod_{\mbox{\tiny$\begin{array}{c}
				l\notin R\\
				(l,r_i)\in T
				\end{array}$}}\Erk\left[\frac{x_{r_i}}{x_l}\right]\prod_{\mbox{\tiny$\begin{array}{c}
				l\notin R\\
				(r_i,l)\in T
				\end{array}$}}\Erk\left[\frac{x_{l}}{x_{r_i}}\right].$$
 		It is easy to see that the degree of $x_{r_s}$ in
		   $$ \prod_{(l,r_i)\in T}\Erk
		   \left[\frac{x_{r_i}}{x_l}\right]\prod_{(r_i,l)\in T}\Erk
		   \left[\frac{x_{l}}{x_{r_i}}\right] $$
		 is at most $r_i-i$ and equals $r_i-i$ if and only if there is no $l \notin R$ for which $(r_i,l)\in T$. It follows that the degree of $x_{r_s}$ in $\Erk A$ is at most $r_1+r_2+\cdots +r_s- (s+1)s/2$ with equality if and only if $r \in R$ and $m \notin R$, implies that $(m,r)\in T$. Since edges in $T$ are reversed in $E \overline{T}$, we must have $(r,m)\in E\overline{T}.$ 
	\end{proof}
    
    In preparation for Lemma~\ref{lem-lead3}, we recall that $T_1$ is a subset of $\{(i, j) \mid 2 \le i <
    j \le n\}$ and $T_2$ is a subset of $\{(i, j) \mid 3 \le i < j \le n\}$. We define the following
    collections of subsets of the set of vertices $1$ through $n$:
    \begin{itemize}
    	   \item $R_{1}$ is the collection of subsets $R$ that satisfy the condition that if $r \in R$, and $m \notin R$, then $(m, r) \in T_{1}$ together with the set $\{l_0\}$ where $l_0$ beats all vertices except for $1$ and is larger than $1$.
    	\item $R_{2}$ is the collection of subsets $R$ that satisfy the condition that if $r \in R$, and $m \notin R$, then $(m, r) \in T_{2}$ together with the set $\{1,l_0\}$ where $l_0 \ge 3$ and $l_0$ beats all vertices except for $1$ and $2$.
    \end{itemize}

	\begin{lem}\label{lem-lead3}
		Let $R'=R\cup \{0\}=\{r_0,r_1,r_2\cdots,r_n\}$. Then the rational functions
		$Q_1(b\mid \mathbf{r;k})$ and $Q_2(b\mid \mathbf{r;k})$ have the following properties:
		\begin{itemize}
			\item[{\bf i}] If $1\leq k_i\leq a_{r_1}+\cdots+a_{r_s}-1$ for all $i$ with $1\leq i\leq s$,
			then $ Q_1(b\mid \mathbf{r;k})=0$ and $ Q_2(b\mid \mathbf{r;k})=0$.
			\item[{\bf ii}] If $k_i\geq a_{r_1}+\cdots+a_{r_s}$ for some $i$ with $1
			\leq i \leq s<n$, and if
			\begin{align}
			b\neq \sum_{r\in R} a_r,
			\end{align}
			when $R \in R_1$, then
			\begin{align}\label{qh5}
			\CT_{x_{r_s}}Q_1(b\mid \mathbf{r;k})= \sum_{\substack{r_s<r_{s+1}\leq n,\\
					1\leq k_{s+1}\leq b}}
			Q_1(b\mid r_1,\ldots,r_s,r_{s+1};k_1,\ldots,k_s,k_{s+1}).
			\end{align}
				\item[{\bf iii}] If $k_i\geq a_{r_1}+\cdots+a_{r_s}$ for some $i$ with $1
			\leq i \leq s<n$, and if
			\begin{align}
			b\neq \sum_{r\in R}a_r,
			\end{align}
			when $R \in R_2$, then
			\begin{align}\label{qh6}
			\CT_{x_{r_s}}Q_2(b\mid \mathbf{r;k})= \sum_{\substack{r_s<r_{s+1}\leq n,\\
					1\leq k_{s+1}\leq b}}
			Q_2(b\mid r_1,\ldots,r_s,r_{s+1};k_1,\ldots,k_s,k_{s+1}).
			\end{align}
		\end{itemize}		
	\end{lem}
	\begin{proof}
		[Proof of property {\bf(i)}] By Lemma \ref{lem-import1}, either $1\le
		k_i\le a_{r_i}-1$ for some $i$ with $1\le i \le s$, or $1-a_{r_j}\le
		k_i-k_j\le a_{r_i}-1$ for some $i<j$, because the exceptional case can not happen. If $1\le k_i\le a_{r_i}-1$ then
		$\Qrki$ and $\Qrkii$ have the factor
		$$\Erk
		\left[\lrq{x_{r_i}}{x_{0}}{q}_{\!\!a_{r_i}-1} \right]
		=\lrq{x_{r_s}q^{k_s-k_i}}{x_{r_s}q^{k_s}}{q}_{\!\!a_{r_i}-1}
		=(q^{1-k_i})_{a_{r_i}-1}=0.$$
		
		If $1-a_{r_j}\le k_i-k_j\le a_{r_i}-1$ where $i<j$ then $\Qrki$ and $\Qrkii$ have
		the factor
		$$\Erk\, \left[\lrq{x_{r_i}}{x_{r_j}}{}_{\!\!a_{r_i}}\!\!\lrq{x_{r_j}}{x_{r_i}}{q}_{\!\!a_{r_j-1}}\right]=\left(q^{k_j-k_i}\right)_{a_{r_i}} \left(q^{k_i-k_j+1}\right)_{a_{r_j}-1}. $$
		If $k_j \le k_i,$ then the first factor is $0$. If $k_j> k_i,$ then the second factor is $0$.\\
		\smallskip
		\noindent\emph{Proof of property {\bf(ii)} and {\bf(iii).}} Note that since $b\geq k_i$
		for all $i$, the hypothesis implies that   $b\geq a_{r_1}+\cdots
		+a_{r_s}$.
		
		We claim that $Q_1(b\mid \mathbf{r;k})$  and $Q_2(b\mid \mathbf{r;k})$ are proper in $x_{r_s}$. To do this, we rewrite $\Qrki$ as $(M_1NC_1)/(A_1DB_1)$ and $\Qrkii$ as $(M_2NC_2)/(A_2DB_2)$ , in which $N,$ $D,$ $M_1,$ $M_2,$ $A_1,$ $A_2,$ $B_1,$ $B_2,$ $C_1,$ $C_2$ are defined by
		
		$$ N=\Erk\, \left[\prod_{j=1}^n\lrq{x_j}{x_0}q_{\!\!a_j-1}
		\cdot \prod_{\substack{1\le i, j\le n\\ j\neq i}}
		\left(\frac{x_i}{x_j}\,q^{\chi(i>j)}\right)_{\!\!a_i}\right],$$
     	$$D=\Erk\, \left[\prod_{j=1}^n \lrq{x_0}{x_jq^b}{}_{\!\!\!b}\biggm/\prod_{i=1}^s\left(1-\frac
		{x_0}{x_{r_i}q^{k_i}}\right)\right],$$
		$$M_1=\Erk \left[\prod_{(i,j)\in T_1}\frac{x_j}{x_i}\right],$$
		$$M_2=\Erk \left[\prod_{(i,j)\in T_2}\frac{x_j}{x_i} \right],$$
	    $$A_1=\Erk\,\left[\prod_{2\leq i<j\leq n}(1-\frac{q^{a_j}x_j}{x_i})\right],$$
	    $$A_2=\Erk\,\left[\prod_{3\leq i<j\leq n}(1-\frac{q^{a_j}x_j}{x_i})\right],$$
		$$B_1=\Erk \left[\prod_{2\leq j\leq n}(1-\frac{q^{a_j}x_j}{x_1}) \right],$$
		$$B_2=\Erk \left[\prod_{2\leq j\leq n}(1-\frac{q^{a_j}x_j}{x_1})\prod_{3\leq j\leq n}(1-\frac{q^{a_j}x_j}{x_2}) \right],$$
	    $$C_1=\Erk \left[\frac{x_0}{x_1} \right],$$
		$$C_2=\Erk \left[\frac{x_0}{x_2} \right],$$
		where $\chi(S)$ is $1$ if the statement $S$ is true, and $0$ otherwise. Note that $R=\{r_1,r_2,\cdots,r_s\}$ and 
		$R'=R\cup\{r_0\}= \{r_0,r_1,\dots,r_s\}$ where $r_0=0$. Then the degree of $x_{r_s}$ in
		$$\Erk\, \left[\left(1-\frac{x_i}{x_j}q^m\right)\right]$$
		is $1$ if $i\in R'$ and $j\not\in R'$, and is $0$ otherwise, as is
		easily seen by checking the four cases. Clearly the degree of $x_{r_s}$
		in $\Erk\, x_i^{b_i}$ is $b_i$ if $i\in R'$ and is $0$ otherwise.
		Thus the part of $N$ contributing to the degree in
		$x_{r_s}$ is
		
		$$E_{\mathbf{r},\mathbf{k}}\left[\prod_{i=1}^s \prod_{j\ne r_0,\dots ,r_s}
		\left(\frac{x_{r_i}}{x_j}q^{\chi(r_i>j)}\right)_{\!\!a_{r_i}}\right],$$
		which has degree
		$
		(n-s)(a_{r_1}+\cdots +a_{r_s}).
		$
		The part of $D$ contributing to the degree of $x_{r_s}$ are
		$$E_{\mathbf{r},\mathbf{k}}\left[\prod_{j\ne r_0,\dots, r_s}\lrq{x_0}{x_jq^b}{}_{\!\!b}\right],$$
		which has degree $(n-s)b$.
		The degree of $x_{r_s}$ in $B_1$ is
		 $$   \Erk \left[\prod_{1\leq i\leq s \atop 1<r_i}(1-\frac{q^{a_{r_i}}x_{r_i}}{x_1})\right].$$
		Thus the part  $B_1$ of  degree in $x_{r_s}$ is $\chi(1\notin R)\cdot s$.
		The part of $B_2$ contributing to the degree of $x_{r_s}$ is
		$$
		    \Erk \left[\prod_{1\leq i\leq s \atop 1<r_i}(1-\frac{q^{a_{r_i}}x_{r_i}}{x_1})\prod_{1\leq i\leq s \atop 2<r_i}(1-\frac{q^{a_{r_i}}x_{r_i}}{x_2})\right].
		$$
		So  the degree of $x_{r_s}$ in $B_2$ is $\chi(1\notin R)\cdot s+\chi(2\notin R)\cdot (s-\chi(1\in R))$.
		The part of $C_1$ contributing to the degree of $x_{r_s}$ is
		$1-\chi(1\in R)$ and the degree of $x_{r_s}$ in $C_2$  is $1-\chi(2 \in R)$.
		Thus the total degree of $x_{r_s}$ in $\Qrki$  is
		\begin{align}
		d_1=(n-s)(a_{r_1}+\cdots +a_{r_s}-b)-\chi(1\notin R)\cdot s+1-\chi(1\in R)+d_{t_1}.
		\end{align}
		The total degree of $\Qrkii$ in $x_{r_s}$ is
		   \begin{equation}
		   \begin{split}
		     d_{2}=&(n-s)(a_{r_1}+\cdots +a_{r_s}-b)-\chi(1\notin R)\cdot s-\\&\chi(2\notin R)\cdot (s-\chi(1\in R))+1-\chi(2\in R)+d_{t_2}.  
		   \end{split}
		   \end{equation}
		where $d_{t_1}$ is the degree of $x_{r_s}$ in $\frac{M_1}{A_1}$  and $d_{t_2}$ is the degree of $x_{r_s}$ in $\frac{M_2}{A_2}$.
		
		By Lemma \ref{lem-import2}, we note that $d_{t_1}$ and $d_{t_2}$ are at most $0$.
		
		We first observe that $-\chi(1 \notin R) · s + 1- \chi(1 \in R) \le 0$ whether or not $1 \in R$.
		Next, we note that $-\chi(1 \notin R) · s + 1- \chi(1 \in R) = 0$ if and only if $1 \in R$ or $s=1$. In these cases, by Lemma~\ref{lem-import2}, $d_{t_1}$
		is negative unless $R \in R_1$, in which case it is $0$. But
		if $R \in R_1$, then $b > a_{r_1} + \cdots a_{r_s}$
		so that $(n - s)(a_{r_1} + \cdots a_{r_s} -b)$ is negative.
		It follows that $d_1$ is strictly negative.

		We check that $-\chi(1 \notin R) · s -\chi(2 \notin R) (s - \chi(1 \in R)) + 1 -\chi(2 \in R) \le 0 $ in
		each of the four cases in which $1$ and $2$ are or are not elements of $R$. Next, $-\chi(1 \notin R) · s -\chi(2 \notin R) (s - \chi(1 \in R)) + 1 -\chi(2 \in R) = 0 $ if and only if
		$1\in R$ and $2\in R$ or $1\in R$ and $s\le 2$. In these cases, again by Lemma \ref{lem-import2}, $d_{t_2}$ is negative unless $R \in R_2$, in which
		case it is $0$. But if $R \in  R_2$, then $b > a_{r_1} +\cdots +a_{r_s}$, so that $(n-s)(a_{r_1} +\cdots + a_{r_s}-b)$ is negative. It follows that $d_2$ is strictly negative.
		
		So $\Qrki$ and $\Qrkii$ are
		proper in $x_{r_s}$. Next, we apply Lemma \ref{lem-almostprop}. For
		any rational function $F$ of $x_{r_s}$ and integers $j$ and $k$, let
		$T_{j,k} F$ be the result of replacing $x_{r_s}$ with
		$x_{j}q^{k-k_s}$ in $F$. Since $x_{r_s}q^{k_s}/(x_jq^k)$ is large when $j<r_s$ and is small
		when $j>r_s$, Lemma~\ref{lem-almostprop}
		gives
		\begin{equation}
		\label{e-TQ3} \CT_{x_{r_s}} \Qrki =\sum_{r_s < r_{s+1}\le n\atop 1\le
			k_{{s+1}}\le b} T_{r_{s+1},k_{s+1}} \left[\Qrki
		\left(1-\frac{x_{r_s}q^{k_s}}{x_{r_{s+1}}q^{k_{s+1}}}\right)\right],\\
		\end{equation}
		\begin{equation}\label{e_TQ4}
	     \CT_{x_{r_s}} \Qrkii =\sum_{r_s < r_{s+1}\le n\atop 1\le
			k_{{s+1}}\le b} T_{r_{s+1},k_{s+1}} \left[\Qrkii
		\left(1-\frac{x_{r_s}q^{k_s}}{x_{r_{s+1}}q^{k_{s+1}}}\right)\right].\\
		\end{equation}
		It is necessary to show that the right-hand side of $\eqref{e-TQ3}$ and $\eqref{e_TQ4}$ is equal to
		the right-hand side of \eqref{qh5} and \eqref{qh6}. Set $\br'=(r_1,\dots,
		r_s, r_{s+1})$ and $\bk'=(k_1,\dots, k_s, k_{s+1})$. Then the
		equality follows easily from the identity
		\begin{equation}
		\label{e-TE} T_{r_{s+1},k_{s+1}}\circ \Erk= E_{\br',\bk'}.
		\end{equation}
		To check that \eqref{e-TE} holds, we see
		$$
		(T_{r_{s+1},k_{s+1}}\circ \Erk)\, x_{r_i}
		=T_{r_{s+1},k_{s+1}}\, \left[ x_{r_s}q^{k_s-k_i}\right]
		= x_{r_{s+1}}q^{k_{s+1}-k_i}= E_{\br',\bk'}\,  x_{r_i},
		$$
		and if $j\notin\{r_0,\dots, r_s\}$ then $(T_{r_{s+1},k_{s+1}}\circ
		\Erk)\, x_{j}=x_j=  E_{\br',\bk'}\, x_{j}$.
        \end{proof}
        
        \begin{thm} \label{eq1}
        If $E\overline{T_1}$ is a nontransitive tournament, then
        $$\CT_{\mathbf{x}} \frac{x_0}{x_1} \prod_{(i,j)\in T_1} \frac{x_j}{x_i}D_n(\mathbf{x},\mathbf{a},q)=0.$$
        \end{thm}
        \begin{proof}
        Applying Lemma~\ref{pd}, LHS is a polynomial of degree at most $|a|-n-1$ in $q^{a_0}$.
       We will proceed by induction on $n-s$ to show that $\CT_{x}\Qrki=0$ when $b\in \{1,2,\cdots, |a|\}$ and
        $b\neq \sum_{r\in R}a_r$, where $R \in R_1$ as defined before the statement of Lemma~\ref{lem-lead3}.
        
        If $n-s=0$, then Lemma~\ref{lem-lead3} {\bf(i)} implies $\Qrki=0$. We next show that if the theorem is true for $n-s=k$, it is also true for $n-s=k+1$.
       If $b<a_{r_1}+a_{r_2}\cdots+a_{r_s}$, 
       Lemma~\ref{lem-lead3}{\bf(i)} gives $\Qrki=0$, otherwise we apply
       Lemma~\ref{lem-lead3}{\bf(ii)},
       \begin{align}
			\CT_{x_{r_s}}Q_1(b\mid \mathbf{r;k})= \sum_{\substack{r_s<r_{s+1}\leq n,\\
					1\leq k_{s+1}\leq b}}
			Q_1(b\mid r_1,\ldots,r_s,r_{s+1};k_1,\ldots,k_s,k_{s+1}).
			\end{align}
			By Lemma~\ref{zp}, $CT_xQ_1(b)$ as a polynomial in $q^{-b}$ has $|a|-|R_1|$ zeroes. To find
			an upper bound on $|R_1|$, we observe that it is the union of two sets, $N_1=\{R\mid \text{if  $r\in R$ and $m\notin R$}, \text{then }  (r,m)\in E\overline{T_1} \}$ and $N_2 = \{R \mid |R|=1$ and the single element of $R$ is larger than $1$ and beats all vertices except $1\}$. By Lemma~\ref{non},  $|N_1| \le n-2$. Since there is at most one vertex that beats all vertices except for the vertex $1$ and is larger than $1$, $|N_2| \le 1$, and therefore
				$|R_1| \le n - 1$ and $ CT_x Q_1(b)$ has at least $|a| - n + 1$ zeroes, implying that it is
				identically $0$.
        \end{proof}
        
         \begin{thm}\label{eq2}
         If $E\overline{T_2}$ is a nontransitive tournament, then
        $$\CT_{\mathbf{x}} \frac{x_0}{x_2} \prod_{(i,j)\in T_2} \frac{x_j}{x_i}D_n(\mathbf{x},\mathbf{a},q)=0.$$
        \end{thm}
        \begin{proof}
        Applying Lemma~\ref{pd}, LHS is a polynomial of degree at most $|a|-n-1$ in $q^{a_0}$.
        We will proceed by induction on $n-s$ to show that $\CT_{x}\Qrki=0$ when $b\in \{1,2,\cdots, |a|\}$ and
        $b\neq \sum_{r\in R}a_r$, where $R \in R_2$ as defined before the statement of Lemma~\ref{lem-lead3}.
        
        If $n-s=0$, then Lemma~\ref{lem-lead3} {\bf(i)} implies $\Qrkii=0$. We next show that if the theorem is true for $n-s=k$, it is also true for $n-s=k+1$.
        If $b<a_{r_1}+a_{r_2}\cdots+a_{r_s}$, 
        applying
        Lemma~\ref{lem-lead3}{\bf(i)}, we obtain $\Qrkii=0$, otherwise we apply
       Lemma~\ref{lem-lead3}{\bf(iii)},
       \begin{align}
			\CT_{x_{r_s}}Q_2(b\mid \mathbf{r;k})= \sum_{\substack{r_s<r_{s+1}\leq n,\\
					1\leq k_{s+1}\leq b}}
			Q_2(b\mid r_1,\ldots,r_s,r_{s+1};k_1,\ldots,k_s,k_{s+1}).
			\end{align}
			
		 	By Lemma~\ref{zp}, $CT_xQ_1(b)$ as a polynomial in $q^{-b}$ has $|a|-|R_2|$ zeroes. To find
		 an upper bound on $|R_2|$, we observe that it is the union of two sets,
		  $N_3=\{R\mid \text{if $r\in R$, $m\notin R$, then}$ $(r,m)\in E\overline{T_2}\}$ and
		 $N_4=\{R\mid R=\{1,l_0\}$ where $l_0\ge 3$ and $l_0$ beats all vertices except for $1$ and $2\}$. By Lemma~\ref{non},  $|N_3| \le n-2$. Since there is at most one vertex that beats all vertices except for the vertices $1$ and $2$ and is larger than $2$, $|N_4| \le 1$, and therefore
		 $|R_2| \le n - 1$ and $ CT_x Q_2(b)$ has at least $|a| - n + 1$ zeroes, implying that it is
		 identically $0$.
        \end{proof}

 \section{Transitive Condition}
    In this section, we prove the transitive condition in these Theorems \ref{mainthm2} and \ref{mainthm3} and Corollary~\ref{main3}.
    We will first prove these theorems when $T_1$ and $T_2$ are empty sets and will use
    Lemma~\ref{lem-main1} to prove the general transitive case as well as Corollary~\ref{main3}.
	Let $b$ be an integer. Define $Q_3(b)$ and $Q_4(b)$ to be
	\begin{align}
	Q_3(b):=\frac{x_0}{x_1} \prod_{j=1}^{n}
	\left(\frac{x_0}{x_j}\right)_{\!\!\!-b}\left(\frac{x_j}{x_0}q\right)_{\!\!\!a_j-1}
	\prod_{1\leq i<j\leq n}
	\left(\frac{x_i}{x_j}\right)_{\!\!\!a_i}\left(\frac{x_j}{x_i}q\right)_{\!\!\!a_j-1},
	\end{align}
	\begin{align}
	Q_4(b):=\frac{x_0}{x_2} \prod_{j=1}^{n}
	\left(\frac{x_0}{x_j}\right)_{\!\!\!-b}\left(\frac{x_j}{x_0}q\right)_{\!\!\!a_j-1}
	\prod_{1\leq i<j\leq n}
	\left(\frac{x_i}{x_j}\right)_{\!\!\!a_i}\left(\frac{x_j}{x_i}q\right)_{\!\!\!a_j-1}.
	\end{align}
	If $b\ge 0$, then
	\begin{align}
	Q_3(b)=\frac{x_0}{x_1}\prod_{j=1}^{n} \frac{(x_jq/x_0)_{a_j-1}}
	{\big(1-\frac{x_0}{x_jq}\big)\big(1-\frac{x_0}{x_jq^2}\big)\cdots
		\big(1-\frac{x_0}{x_jq^b}\big)}
	\prod_{1\leq i<j\leq n} \left(\frac{x_i}{x_j}\right)_{\!\!\!a_i}\left(\frac{x_j}{x_i}q\right)_{\!\!\!a_j-1},
	\end{align}
	\begin{align}
	Q_4(b)=\frac{x_0}{x_2}\prod_{j=1}^{n} \frac{(x_jq/x_0)_{a_j-1}}
	{\big(1-\frac{x_0}{x_jq}\big)\big(1-\frac{x_0}{x_jq^2}\big)\cdots
		\big(1-\frac{x_0}{x_jq^b}\big)}
	\prod_{1\leq i<j\leq n} \left(\frac{x_i}{x_j}\right)_{\!\!\!a_i}\left(\frac{x_j}{x_i}q\right)_{\!\!\!a_j-1}.
	\end{align}
	\begin{align}
	Q_3(b\mid\mathbf{r;k})=Q_3(b\mid r_1,\ldots,r_s;k_1,\ldots,k_s)=
	E_{\mathbf{r,k}}\left[Q_3(b)\prod_{i=1}^{s}\Big(1-\frac{x_0}{x_{r_i}q^{k_i}}\Big)\right],
	\end{align}
	\begin{align}
	Q_4(b\mid\mathbf{r;k})=Q_4(b\mid r_1,\ldots,r_s;k_1,\ldots,k_s)=
	E_{\mathbf{r,k}}\left[Q_4(b)\prod_{i=1}^{s}\Big(1-\frac{x_0}{x_{r_i}q^{k_i}}\Big)\right].
	\end{align}

	Let $s_k=\sum_{i=1}^k a_i$. For $Q_3$ we consider values of $b$ in $\{1,2\cdots s_n\}$ / $\{s_1,\cdots s_n,a_2\}$. For $Q_4(b)$, we consider values of $b$ in $\{1,2\cdots s_n\}$ / $\{s_1,\cdots s_n,a_1+a_3\}$. We also consider $b=0$ for both $Q_3(b)$ and $Q_4(b)$.
		\begin{lem}\label{x1}
			Let $a_1,a_2,\cdots,a_n$ be nonnegative integers and set $a_0=0$. We have the following identity
		\begin{equation}
		  \CT_{\mathbf{x}}Q_3(0)=\frac{(q^{a_1}-q^{a_2})}{1-q^{a_2}}\frac{(q)_{a_1+a_2+\cdots+a_n}}{(q)_{a_1}(q)_{a_2}+\cdots+(q)_{a_n}}\prod_{i=1}^n\frac{1-q^{a_i}}{1-q^{s_i}}.
		\end{equation}
		\end{lem}
        \begin{proof}
        Applying Theorem~\ref{t-main-thm}:
        \begin{align*}
            \CT_{\mathbf{x}}Q_3(0) =& \CT_{\mathbf{x}}\frac{x_0}{x_1}\prod_{i=1}^n(\frac{qx_i}{x_0})_{a_i-1}\prod_{1\leq i<j \leq n}(\frac{x_i}{x_j})_{a_i}(\frac{qx_j}{x_i})_{a_j-1} \\
                                 =&\CT_{\mathbf{x}}\sum_{i=1}^n \frac{q^{a_i}-q}{1-q}\frac{x_i}{x_1}\prod_{1\leq i<j \leq n}(\frac{x_i}{x_j})_{a_i}(\frac{qx_j}{x_i})_{a_j-1} \\
                                 =&\frac{q^{a_1}-q}{1-q}\frac{(q)_{a_1+a_2+\cdots+a_n}}{(q)_{a_1}(q)_{a_2}+\cdots+(q)_{a_n}}\prod_{i=1}^n\frac{1-q^{a_i}}{1-q^{s_i}}\\
                                 &- \frac{q^{a_2}-q}{1-q}\frac{(q)_{a_1+a_2+\cdots+a_n}}{(q)_{a_1}(q)_{a_2}+\cdots+(q)_{a_n}}\prod_{i=1}^n\frac{1-q^{a_i}}{1-q^{s_i'}}\\
                                 =&\frac{(q^{a_1}-q^{a_2})}{1-q^{a_2}}\frac{(q)_{a_1+a_2+\cdots+a_n}}{(q)_{a_1}(q)_{a_2}+\cdots+(q)_{a_n}}\prod_{i=1}^n\frac{1-q^{a_i}}{1-q^{s_i}},
        \end{align*}
        where $s_1'=a_2$, $s_k'=\sum_{i=1}^k a_i(k\geq 2).$
        \end{proof}
        \begin{lem}\label{x12}
        Let $a_0=-b$, then
        	\begin{equation}
	    \CT_{\mathbf{x}}Q_3(b)=
     \Big(\frac{q^{a_1}-q^{a_2}}{1-q^{a_0+a_2}}\Big)\frac{(q)_{a_0+a_1+\cdots+a_n}}{(q)_{a_0}(q)_{a_1}\cdots (q)_{a_n}}\prod_{i=1}^n\frac{1-q^{a_i}}{(1-q^{a_0+s_i})}.
	\end{equation}
        \end{lem}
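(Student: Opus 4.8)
The plan is to regard both sides as polynomials in the single variable $z=q^{a_0}$ (with $a_0=-b$) and to invoke the interpolation principle: two polynomials of degree at most $D$ that agree at $D+1$ points coincide. First I would bound the degree of the left-hand side. Since $Q_3(b)$ is exactly $\frac{x_0}{x_1}D_n(\mathbf{x},\mathbf{a},q)$ with $a_0=-b$, writing $\frac{x_0}{x_1}=x_0^{1}\cdot\frac{1}{x_1}$ and applying Lemma~\ref{pd} with $k=1$ and $L=1/x_1$ shows that $\CT_{\mathbf{x}}Q_3(b)$ is a polynomial in $z$ of degree at most $|a|-n-1$.

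Next I would check that the right-hand side is also a polynomial in $z$ of degree at most $|a|-n-1$. Writing $\frac{(q)_{a_0+|a|}}{(q)_{a_0}}=(1-q^{a_0+1})\cdots(1-q^{a_0+|a|})$, the denominator factors $\prod_{i=1}^{n}(1-q^{a_0+s_i})$ cancel $n$ of these (the $s_i$ lie in $\{1,\dots,|a|\}$), and the remaining denominator factor $(1-q^{a_0+a_2})$ cancels against the numerator factor with $m=a_2$, which is present because $a_2\in\{1,\dots,|a|\}\setminus\{s_1,\dots,s_n\}$ in the generic case; when $a_2$ coincides with some $s_i$ the numerator $q^{a_1}-q^{a_2}$ forces the whole expression to vanish. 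By the same cancellation as in Lemma~\ref{zp}, this leaves a polynomial in $z$ of degree $|a|-n-1$ whose roots are exactly $z=q^{-b}$ for $b\in\{1,\dots,|a|\}\setminus\{s_1,\dots,s_n,a_2\}$.

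I would then produce the matching zeros of the left-hand side. With $Q_1=\emptyset$ the relevant tournament is $E\overline{\emptyset}=E$, the transitive tournament $1\to 2\to\cdots\to n$; its dominant subsets are the initial segments $\{1,\dots,k\}$ (giving $\sum_{r\in R}a_r=s_k$) together with the singleton $\{2\}$ coming from the exceptional clause of $R_1$ (giving $a_2$), so $R_1=N_1\cup N_2$ with $\{\sum_{r\in R}a_r\}=\{s_1,\dots,s_n,a_2\}$. Running the descent of Lemma~\ref{lem-lead3}, parts {\bf(i)} and {\bf(ii)}, exactly as in the proof of Theorem~\ref{eq1} but specialized to $Q_3$, the recursion proceeds and bottoms out at $\CT_{\mathbf{x}}Q_3(b)=0$ whenever $b$ avoids these values, that is for every $b\in\{1,\dots,|a|\}\setminus\{s_1,\dots,s_n,a_2\}$. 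This yields $|a|-n-1$ common zeros of the two degree-$(|a|-n-1)$ polynomials.

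Finally, since $|a|-n-1$ coincident roots determine the two polynomials only up to a common scalar, I would pin down the scalar with the single extra evaluation at $b=0$. Specializing the right-hand side at $a_0=0$ (so $(q)_{a_0}=1$, $a_0+s_i=s_i$, $a_0+a_2=a_2$) gives $\frac{q^{a_1}-q^{a_2}}{1-q^{a_2}}\frac{(q)_{a_1+\cdots+a_n}}{(q)_{a_1}\cdots(q)_{a_n}}\prod_{i=1}^{n}\frac{1-q^{a_i}}{1-q^{s_i}}$, which is exactly $\CT_{\mathbf{x}}Q_3(0)$ as computed in Lemma~\ref{x1}. Because $b=0$ is distinct from all the zero-points, the difference of the two sides is a polynomial in $z$ of degree at most $|a|-n-1$ with $|a|-n$ distinct roots, hence identically zero. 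I expect the main obstacle to be the third step: correctly identifying the dominating family $R_1=N_1\cup N_2$ for the transitive tournament and verifying that the Gessel--Xin descent indeed terminates at $0$ for precisely the complementary values of $b$, so that the zero sets of the two sides match exactly and one additional evaluation closes the argument.
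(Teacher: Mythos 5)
Your proposal is correct and follows the paper's own proof essentially step for step: the paper likewise bounds the degree of $\CT_{\mathbf{x}}Q_3(b)$ in $q^{a_0}$ by $|a|-n-1$ via Lemma~\ref{pd}, reads off the zero set $\{1,\dots,s_n\}\setminus\{s_1,\dots,s_n,a_2\}$ of the right-hand side from Lemma~\ref{zp}, kills the left-hand side at exactly those values of $b$ by the induction on $n-s$ through Lemma~\ref{lem-lead3}{\bf(i)},{\bf(ii)} (with $R_1$ given by the initial segments of the transitive tournament $E$ together with the exceptional singleton $\{2\}$), and pins down the remaining freedom with the single additional evaluation at $b=0$ supplied by Lemma~\ref{x1}. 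The step you flagged as the likely obstacle is handled in the paper exactly as you describe, so there is no substantive difference between the two arguments.
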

        \begin{proof}
        By Lemma~\ref{pd}, LHS is a polynomial in $q^{-b}$ of degree at most $|a|-n-1$.
        From Lemma~\ref{zp}, RHS is a polynomial in $q^{a_0}$ with zeroes at $q^i$ for $ i\in \{-1,-2,\cdots,-s_n\}/\{-s_1,-s_2,\cdots,\\-s_n,-a_2\}.$
        
        We show that $\CT_{x}Q_3(b)$ is zero when $b\in \{1,2,\cdots,s_n\}$/$\{s_1,s_2\cdots,s_n,a_2\}$. We proceed by induction on $n-s$ to show that $\CT_{x}\Qrka=0$. If $n-s=0$, then Lemma~\ref{lem-lead3} {\bf{(i)}} gives $\Qrka=0$. We now assume that $\Qrka=0$ when $n-s=k$. To verify that it is also true when $n-s=k+1$, we note that if $b<a_{r_1}+a_{r_2}+\cdots + a_{r_{s+1}}$, then Lemma~\ref{lem-lead3} {\bf{(ii)}} gives $\Qrka=0$. Otherwise we apply Lemma~\ref{lem-lead3} {\bf{(ii)}} to obtain
         \begin{align}
        \CT_{x_{r_s}}Q_3(b\mid \mathbf{r;k})= \sum_{\substack{r_s<r_{s+1}\leq n,\\
        		1\leq k_{s+1}\leq b}}
        Q_3(b\mid r_1,\ldots,r_s,r_{s+1};k_1,\ldots,k_s,k_{s+1}).
        \end{align}  
        Now, every term $\CT_{x}\Qrka=0$. Since both sides are polynomials of degree at most $|a|-n-1$ zeroes and, by Lemma~\ref{x1}, they agree at $q^{a_0}=q^0=1$, they must be equal.
        \end{proof}
        \begin{lem}\label{x2}
        	Let $a_1,a_2,\cdots,a_n$ be nonnegative integers, then we have the next identity,
        \begin{equation}
            \CT_{\mathbf{x}}Q_4(0)=\frac{(1+q^{a_1})(q^{a_2}-q^{a_3})}{1-q^{a_1+a_3}}\frac{(q)_{a_1+a_2+\cdots+a_n}}{(q)_{a_1}(q)_{a_2}+\cdots+(q)_{a_n}}\prod_{i=1}^n\frac{1-q^{a_i}}{1-q^{s_i}}.
        \end{equation}
        \end{lem}
        \begin{proof}
       By Theorem~\ref{t-main-thm} and Lemma~\ref{x12}:
        \begin{align*}
            \CT_{\mathbf{x}}Q_4(0) =& \CT_{\mathbf{x}}\frac{x_0}{x_2}\prod_{i=1}^n\Big(\frac{qx_i}{x_0}\Big)_{a_i-1}\prod_{1\leq i<j \leq n}\Big(\frac{x_i}{x_j}\Big)_{a_i}\Big(\frac{qx_j}{x_i}\Big)_{a_j-1} \\
                                 =&\CT_{\mathbf{x}}\sum_{i=1}^n \frac{q^{a_i}-q}{1-q}\frac{x_i}{x_2}\prod_{1\leq i<j \leq n}\Big(\frac{x_i}{x_j}\Big)_{a_i}\Big(\frac{qx_j}{x_i}\Big)_{a_j-1} \\
                                  =&\frac{(q^{a_1}-q)(q^{a_2}-q^{a_3})}{(1-q)(1-q^{a_1+a_3})}\frac{(q)_{a_1+a_2+\cdots+a_n}}{(q)_{a_1}(q)_{a_2}+\cdots+(q)_{a_n}}\prod_{i=1}^n\frac{1-q^{a_i}}{1-q^{s_i}}\\
                                  &+\frac{q^{a_2}-q}{1-q}\frac{(q)_{a_1+a_2+\cdots+a_n}}{(q)_{a_1}(q)_{a_2}+\cdots+(q)_{a_n}}\prod_{i=1}^n\frac{1-q^{a_i}}{1-q^{s_i}}\\
                                  &- \frac{q^{a_3}-q}{1-q}\frac{(q)_{a_1+a_2+\cdots+a_n}}{(q)_{a_1}(q)_{a_2}+\cdots+(q)_{a_n}}\prod_{i=1}^n\frac{1-q^{a_i}}{1-q^{s_i'}}\\
                                  &=\frac{(1+q^{a_1})(q^{a_2}-q^{a_3})}{1-q^{a_1+a_3}}\frac{(q)_{a_1+a_2+\cdots+a_n}}{(q)_{a_1}(q)_{a_2}+\cdots+(q)_{a_n}}\prod_{i=1}^n\frac{1-q^{a_i}}{1-q^{s_i}}, \\
        \end{align*}
        where $s_2'=a_1+a_3$ and $s_k'=\sum_{i=1}^k a_i(k\neq 2)$.
        \end{proof}
        \begin{lem}\label{x02}
        Let $a_0=-b$, then 
        \begin{equation}
	    \CT_{\mathbf{x}}Q_4(b)=
	\frac{(1+q^{a_1})(q^{a_2}-q^{a_3})}{(1-q^{a_0+a_1+a_3})}
        \frac{(q)_{a_0+a_1\cdots +a_n}}{(q)_{a_0}(q)_{a_1}\cdots+(q)_{a_n}}
        \prod_{i=1}^{n}\frac{1-q^{a_i}}{1-q^{a_0+s_i}}.
	\end{equation}
        \end{lem}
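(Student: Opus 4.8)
The plan is to run the argument of Lemma~\ref{x12} essentially verbatim, with $Q_3$ replaced by $Q_4$ and the exceptional value $a_2$ replaced by $a_1+a_3$. Writing $a_0=-b$, I first observe that the prefactor $x_0/x_2$ contributes a single power $x_0^1$, so Lemma~\ref{pd} (with $k=1$ and $L=1/x_2$) shows the left-hand side $\CT_{\mathbf{x}}Q_4(b)$ is a polynomial in $q^{a_0}$ of degree at most $|a|-n-1$. For the right-hand side I apply Lemma~\ref{zp} exactly as in the proof of Lemma~\ref{x12}: the factor $(q)_{a_0+a_1+\cdots+a_n}/(q)_{a_0}=\prod_{j=1}^{|a|}(1-q^{a_0+j})$ contributes potential zeros at $a_0\in\{-1,\dots,-s_n\}$, the product $\prod_{i=1}^n 1/(1-q^{a_0+s_i})$ cancels those at $a_0=-s_i$, and the single exceptional denominator $1/(1-q^{a_0+a_1+a_3})$ cancels the one at $a_0=-(a_1+a_3)$. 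Thus the right-hand side is likewise a polynomial in $q^{a_0}$ of degree at most $|a|-n-1$, vanishing precisely at $a_0\in\{-1,\dots,-s_n\}\setminus\{-s_1,\dots,-s_n,-(a_1+a_3)\}$.

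The core step is to show that $\CT_{\mathbf{x}}Q_4(b)=0$ at the matching integer points $b\in\{1,\dots,s_n\}\setminus\{s_1,\dots,s_n,a_1+a_3\}$. I would establish this exactly as in Theorem~\ref{eq2}: fix such a $b$ and induct downward on $n-s$. The base case $n-s=0$ is Lemma~\ref{lem-lead3}{\bf(i)}, which forces $\Qrkb=0$. For the inductive step, if $b<a_{r_1}+\cdots+a_{r_s}$ then Lemma~\ref{lem-lead3}{\bf(i)} again gives $\Qrkb=0$; otherwise the hypothesis $b\notin\{\sum_{r\in R}a_r:R\in R_2\}$ permits applying Lemma~\ref{lem-lead3}{\bf(iii)} and expanding through \eqref{qh6}, which reduces the problem to level $n-(s+1)$. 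Unwinding the recursion then yields $\CT_{\mathbf{x}}Q_4(b)=0$ at each such $b$.

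To conclude, I would supply the single additional evaluation needed to pin down a polynomial of degree at most $|a|-n-1$. The $|a|-n-1$ common zeros found above leave only a one-dimensional ambiguity, and Lemma~\ref{x2} removes it by fixing the value at $b=0$ (that is, $a_0=0$) on both sides. Two polynomials of degree at most $|a|-n-1$ that agree at these $|a|-n$ points must coincide, which is the asserted identity.

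The main obstacle is the bookkeeping concealed in the second paragraph: one must verify that, when $Q_2=\emptyset$ so that $E\overline{Q_2}$ is the transitive tournament $1\to 2\to\cdots\to n$, the exclusion set $\{\sum_{r\in R}a_r:R\in R_2\}$ from \eqref{r2} collapses to exactly $\{s_1,\dots,s_n,a_1+a_3\}$. Here the dominant initial segments $R=\{1,\dots,k\}$ account for the partial sums $s_k$, while the one exceptional set allowed by \eqref{r2}, namely $R=\{1,3\}$ (whose sole reversed edge into its complement is $2\to 3$), accounts for $a_1+a_3$. Matching this set against the poles cancelled on the right-hand side, and checking that $a_1+a_3$ is generically distinct from every $s_i$ so that the zero count is exactly $|a|-n-1$, is the one place where genuine care is required.
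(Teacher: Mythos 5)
Your proposal follows the paper's own proof essentially verbatim: the same degree bound via Lemma~\ref{pd}, the same zero analysis of the right-hand side via Lemma~\ref{zp}, the same downward induction on $n-s$ using Lemma~\ref{lem-lead3}\textbf{(i)} and \textbf{(iii)} to kill $\CT_{\mathbf{x}}Q_4(b)$ at $b\in\{1,\dots,s_n\}\setminus\{s_1,\dots,s_n,a_1+a_3\}$, and the same extra evaluation at $b=0$ supplied by Lemma~\ref{x2}. Your closing remark correctly identifies the bookkeeping the paper leaves implicit, namely that for $Q_2=\emptyset$ the exclusion set from \eqref{r2} is exactly $\{s_1,\dots,s_n,a_1+a_3\}$ with $R=\{1,3\}$ the unique exceptional set.
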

        \begin{proof} By Lemma~\ref{pd}, LHS is a polynomial in $q^{-b}$ of degree at most $|a|-n-1$.
        	From Lemma~\ref{zp}, RHS is a polynomial in $q^{a_0}$ with zeroes at $q^i$ for $ i\in \{-1,-2,\cdots,-s_n\}/\{-s_1,-s_2,\cdots,\\-s_n,-a_1-a_3\}.$
        	
        	We show that $\CT_{x}Q_4(b)$ is zero when $b\in \{1,2,\cdots,s_n\}$/$\{s_1,s_2\cdots,s_n,a_1+a_3\}$. We proceed by induction on $n-s$ to show that $\CT_{x}\Qrka=0$. If $n-s=0$, then Lemma~\ref{lem-lead3}{\bf{(i)}} gives $\Qrka=0$. We now assume that $\Qrka=0$ when $n-s=k$. To verify that it is also true when $n-s=k+1$, we note that if $b<a_{r_1}+a_{r_2}+\cdots + a_{r_{s+1}}$, then Lemma~\ref{lem-lead3}{\bf{(i)}} gives $\Qrka=0$. Otherwise we apply Lemma~\ref{lem-lead3}{\bf{(iii)}} to obtain
        	\begin{align}
        	\CT_{x_{r_s}}Q_4(b\mid \mathbf{r;k})= \sum_{\substack{r_s<r_{s+1}\leq n,\\
        			1\leq k_{s+1}\leq b}}
        	Q_4(b\mid r_1,\ldots,r_s,r_{s+1};k_1,\ldots,k_s,k_{s+1}).
        	\end{align}	 	
        	Now, every term $\CT_{x}\Qrkb=0$. Since both sides are polynomials of degree at most $|a|-n-1$ zeroes and, by Lemma~\ref{x2}, they agree at $q^{a_0}=q^0=1$, they must be equal.
      \end{proof}
  
      {\bf{Conclusion to the proofs of Theorems \ref{mainthm2} and \ref{mainthm3} and Corollary~\ref{main3}.} }
      We note that corollary~\ref{main3} extends Lemmas \ref{x02} and \ref{x12} to any transitive tournaments since we can let $\sigma$ be any winner permutation in $E\overline{T_1}$ and $E\overline{T_2}$. So we only need to prove
      corollary~\ref{main3}.\\
      (i) Applying Lemma~\ref{lem-main1}, Lemma~\ref{sigma} and Lemma~\ref{x12}, then
        \begin{align*}
             & \CT_{\mathbf{x}}\frac{x_{\sigma(1)}}{x_{\sigma(2)}}\prod_{(i,j)\in T}\frac{x_i}{x_j}
	\prod_{1\leq i<j\leq n}\Big(\frac{x_i}{x_j}\Big)_{a_i}\Big(\frac{qx_j}{x_i}\Big)_{a_j-1}\\
	 =& (-1)^{|T|}\CT_{\mathbf{x}}\frac{x_{\sigma(1)}}{x_{\sigma(2)}}\prod_{1\leq i<j\leq n}\Big(\frac{x_{\sigma(i)}}{x_{\sigma(j)}}\Big)_{a_\sigma(i)}\Big(\frac{qx_{\sigma(j)}}{x_{\sigma(i)}}\Big)_{a_{\sigma(j)-1}}\\
	  =& (-1)^{|T|}\CT_{\mathbf{x}}\frac{x_1}{x_2}\prod_{1\leq i<j\leq n}\Big(\frac{x_i}{x_j}\Big)_{a_{\sigma(i)}}\Big(\frac{qx_j}{x_i}\Big)_{a_{\sigma(j)-1}}\\
	 =& (-1)^{|T|}\frac{q^{a_{\sigma(2)}}-q^{a_{\sigma(3)}}}{1-q^{a_{\sigma(1)}+a_{\sigma(3)}}}\frac{(q)_{a_1+a_2+\cdots+a_n}}{(q)_{a_1}(q)_{a_2}\cdots (q)_{a_n}} \prod_{i=1}^n\frac{1-q^{a_{\sigma(i)}}}{1-q^{\Psi_i}}.\\
        \end{align*}
        (ii) We apply Lemma~\ref{lem-main1}, Lemma~\ref{sigma} and Lemma~\ref{x02}, then
           \begin{align*}
             & \CT_{\mathbf{x}}\frac{x_{\sigma(1)}}{x_{\sigma(3)}}\prod_{(i,j)\in T}\frac{x_i}{x_j}
	\prod_{1\leq i<j\leq n}\Big(\frac{x_i}{x_j}\Big)_{a_i}\Big(\frac{qx_j}{x_i}\Big)_{a_j-1}\\
	 =& (-1)^{|T|}\CT_{\mathbf{x}}\frac{x_{\sigma(1)}}{x_{\sigma(3)}}\prod_{1\leq i<j\leq n}\Big(\frac{x_{\sigma(i)}}{x_{\sigma(j)}}\Big)_{a_{\sigma(i)}}\Big(\frac{qx_{\sigma(j)}}{x_{\sigma(i)}}\Big)_{a_{\sigma(j)-1}}\\
	 =& (-1)^{|T|}\CT_{\mathbf{x}}\frac{x_1}{x_3}\prod_{1\leq i<j\leq n}\Big(\frac{x_i}{x_j}\Big)_{a_{\sigma(i)}}\Big(\frac{qx_j}{x_i}\Big)_{a_{\sigma(j)-1}}\\
	 =& (-1)^{|T|}
	\frac{(1+q^{a_{\sigma(2)}})(q^{a_{\sigma(3)}}-q^{a_{\sigma(4)}})}{1-q^{a_{\sigma(1)}+a_{\sigma(2)}+a_{\sigma(4)}}}\frac{(q)_{a_1+a_2+\cdots+a_n}}{(q)_{a_1}(q)_{a_2}\cdots (q)_{a_n}} \prod_{i=1}^n\frac{1-q^{a_{\sigma(i)}}}{1-q^{\Psi_i}}. \\
	\end{align*}
	
	\vspace{.2cm} \noindent{\bf Acknowledgments.} I would like to express my very great appreciation to  Professor Mourad E. H. Ismail  for his valuable and constructive suggestions during the planning and development of this research work. I am pleased to acknowledge Professor David Bressoud for his wise and valuable advice and comments. I sincerely thank Professor Ira M. Gessel for his useful suggestions and comments. I also would like to acknowledge the helpful guidance from Dr.Yue Zhou. Last but not least, I am very grateful to a very kind referee who reads the paper carefully and provides many valuable suggestions which give a substantial improvement for this paper.

	\end{document}